\documentclass[12pt]{amsart}
\textheight=9.0  true in
\textwidth=6.5 true in
\hoffset=-0.7true in
\voffset=-0.5true in
\usepackage{amsmath, amsthm, amssymb, epsf}
\usepackage[dvips]{graphics}

\newtheorem{theorem}{Theorem}
\newtheorem{lemma}[theorem]{Lemma}
\newtheorem{proposition}[theorem]{Proposition}
\newtheorem{corollary}[theorem]{Corollary}
\numberwithin{equation}{section}
\numberwithin{theorem}{section}


\def\({\left(}
\def\){\right)}

\newcommand{\R}{\mathcal{R}}

\newcommand{\A}{\mathcal{A}}
\newcommand{\E}{\mathcal{E}}
\newcommand{\I}{\mathcal{I}}

\newcommand{\ontop}[2]{\genfrac{}{}{0pt}{}{#1}{#2}}

\renewcommand{\S}{\mathcal{S}}

\renewcommand{\a}{\alpha}

\renewcommand{\r}{\rho}
\newcommand{\g}{\gamma}
\renewcommand{\l}{\mathcal{L}}

\renewcommand{\Re}{{\rm{Re}}}
\renewcommand{\Im}{{\rm{Im}}}
\renewcommand{\i}{{\mathrm{i}}}    
\renewcommand{\d}{{\mathrm{d}}}

\begin{document}
\title[Pair correlation of the zeros of $\xi'$]{Pair correlation of the zeros of the derivative of the
Riemann $\xi$-function}

\author{David W. Farmer}
 \address{American Institute of Mathematics, 360 Portage Avenue, Palo Alto,
CA 94306-2244, USA.}
 \email{farmer@aimath.org}

\author{Steven M. Gonek}
 \address{Department of Mathematics, University of Rochester, Rochester, NY 14627, USA}
 \email{gonek@math.rochester.edu}

\thanks{Both authors were supported by
the American Institute of Mathematics and the NSF Focused Research
Group grant DMS 0244660. Work of the second author was also
supported by NSF grants DMS 0201457 and DMS 0653809.}

\begin{abstract}
The complex zeros of the Riemannn zeta-function are identical to the zeros of the
Riemann xi-function, $\xi(s)$.  Thus, if the Riemann Hypothesis is true for the zeta-function,
it is true for $\xi(s)$. Since $\xi(s)$ is entire, the zeros of $\xi'(s)$,  its derivative, would then also
satisfy a Riemann Hypothesis.  We investigate the pair correlation function of the zeros of  
$\xi'(s)$  under the assumption that the Riemann Hypothesis is true. We then deduce 
consequences about the size of gaps between these zeros and the proportion of these  
zeros that are simple.
\end{abstract}

\maketitle

\section{Introduction}

Riemann's xi-function is defined by
\begin{equation}\label{xi}%
 \xi (s) = \frac 12 s (s-1) \pi^{-s/2} \Gamma (s/2) \zeta (s),
\end{equation}
where $\Gamma(s)$ is the Euler $\Gamma$-function and $\zeta(s)$ is
the Riemann $\zeta$-function.
The $\xi$-function
is an entire function of order 1 with functional equation
\begin{equation}\label{fnc equ}%
  \xi (1-s) = \xi (s),
\end{equation}
and  its only zeros are the complex zeros of $\zeta (s)$.  Thus, if 
the Riemann Hypothesis (RH) is true, all the zeros of $\xi(s)$ have real
part one half, and the same would hold for all the zeros of the
derivative $\xi'(s)$.
We assume RH throughout this paper and investigate the
distribution of the zeros  $\r = \frac12 + \i \g$ of $\xi' (s)$.

The distribution of zeros of $\xi'$ is of interest for number-theoretic reasons connected
to the problem of Landau-Siegel zeros, and also in connection to the general behavior
of zeros of entire functions under differentiation.  We discuss those motivations in the
next section.

We calculate  
\begin{equation}\label{F_1}%
F_1 (\alpha, T) = N_1(T)^{-1}
\sum_{0 < \g, \g' \leq T} T^{\i \alpha (\g -
\g)} w(\g  - \g')\;,
\end{equation}
where the sum is over pairs of ordinates of zeros of $\xi' (s)$ and $w
(u)= 4/(4 + u^2)$ is a weight function.  The normalizing factor
$N_1(T)\sim \frac{1}{2\pi} T\log T$ in front
of the sum is the number of zeros of $\xi'$ with ordinates in $[0,T]$,
which (on RH) differs from the number of zeros of $\xi$ by at most~1.

\begin{theorem}\label{thm:F1} Let $K$ be an arbitrary large positive integer.  
Assuming RH we have
\begin{align*}
F_1 (\alpha, T) =\mathstrut & N(T)^{-1} \sum_{0 < \g, \g' \leq T} T^{\i \alpha
(\g  - \g' )} w (\g  - \g') \\
 =\mathstrut & (1 + o(1))  T^{- 2 |\alpha |}  \log T +  |\alpha | - 4 |\alpha |^2
+ \sum^K_{k=1}
\frac{(k-1)!}{(2k)!} (2 |\alpha |)^{2k+1} + o_K (1)
\end{align*}
as $T\to\infty$, for $|\alpha | < 1$.
\end{theorem}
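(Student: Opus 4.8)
The plan is to adapt Montgomery's treatment of the pair correlation of the zeros of $\zeta$, with $-\zeta'/\zeta$ replaced by $-\xi''/\xi'$, whose only poles are the zeros of $\xi'$ (all on the critical line under RH; $\xi(0),\xi(1)\neq 0$ and $\xi,\xi'$ share no zeros, so the residue at each is the multiplicity). Since the summand in $F_1$ is symmetric in $\gamma,\gamma'$ and $w$ is even, $F_1(\alpha,T)$ is real and $F_1(\alpha,T)=F_1(|\alpha|,T)$; hence one may assume $\alpha\geq 0$ and set $x=T^{\alpha}$. Using
\[
\frac{2}{\pi}\int_{-\infty}^{\infty}\frac{dt}{\bigl(1+(t-\gamma)^2\bigr)\bigl(1+(t-\gamma')^2\bigr)}=w(\gamma-\gamma'),
\]
the weighted pair sum becomes $\frac{2}{\pi}\int_{-\infty}^{\infty}\bigl|\sum_{0<\gamma\leq T}x^{\i\gamma}/(1+(t-\gamma)^2)\bigr|^{2}\,dt$, the sums over ordinates of zeros of $\xi'$. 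Since these zeros have density $\sim\frac{1}{2\pi}\log t$ at height $t$ and the Cauchy kernel decays, one may (as in Montgomery) restrict the integral to $t\in[0,T]$ and drop the condition $0<\gamma\leq T$ inside, at the cost of admissible errors; write $S(t)=\sum_{\gamma}x^{\i\gamma}/(1+(t-\gamma)^2)$ for the resulting full sum.

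The heart of the matter is an explicit formula for $S(t)$. With $v=\tfrac12+\i t$ and $\rho'$ over the zeros of $\xi'$, partial fractions give the absolutely convergent expansion $S(t)=-x^{-1/2}\sum_{\rho'} x^{\rho'}\bigl/\bigl((\rho'-(v-1))(\rho'-(v+1))\bigr)$. Integrating $\frac{\xi''}{\xi'}(s)\,x^{s}\bigl/\bigl((s-(v-1))(s-(v+1))\bigr)$ around the zeros of $\xi'$ and pushing the contour to $\Re s=c$ and $\Re s=1-c$ (a fixed $c>\tfrac32$), and using the functional equation $\frac{\xi''}{\xi'}(1-s)=-\frac{\xi''}{\xi'}(s)$, expresses this via two integrals on $\Re s=c$ together with the residues of the kernel at $s=v\pm1$, i.e.\ $\pm\frac{\xi''}{\xi'}(v\pm1)x^{v\pm1}$. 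Into all of these one substitutes
\[
\frac{\xi''}{\xi'}(s)=\frac{\xi'}{\xi}(s)+\frac{g'}{g}(s),\qquad g:=\frac{\xi'}{\xi},\qquad \frac{\xi'}{\xi}(s)=\frac1s+\frac{1}{s-1}-\frac{\log\pi}{2}+\frac12\frac{\Gamma'}{\Gamma}\Bigl(\frac s2\Bigr)+\frac{\zeta'}{\zeta}(s).
\]
The terms from $\frac1s+\frac1{s-1}-\tfrac12\log\pi+\tfrac12\frac{\Gamma'}{\Gamma}(s/2)$, after the usual cancellations between contour integrals and kernel residues, leave the smooth main term $S_{\mathrm{main}}(t)$, asymptotic to a constant multiple of $x^{-1}x^{\i t}\log\tfrac{t}{2\pi}$, which encodes the density of the zeros and, once squared and integrated, yields the $(1+o(1))T^{-2\alpha}\log T$ piece exactly as in the $\xi$-case; the $\zeta'/\zeta$-terms produce prime-power Dirichlet polynomials, handled exactly as by Montgomery; and the new ingredient is the contribution of $g'/g$.

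To treat $g'/g$ one uses, on $\Re s=c$ and (via the functional equation) on $\Re s=1-c$, the expansions $g(s)=\tfrac12\log\tfrac{s}{2\pi}+\tfrac{\zeta'}{\zeta}(s)+O(|s|^{-1})$ and $g'(s)=\tfrac1{2s}+(\zeta'/\zeta)'(s)+O(|s|^{-2})$, so that $\frac{g'}{g}(s)$ has a convergent expansion obtained by expanding $\log g(s)=\log\bigl(\tfrac12\log\tfrac{s}{2\pi}\bigr)+\log(1+h(s))$ with $h$ the lower-order part of $g$, and differentiating; truncating after $K$ terms produces the finite sum $\sum_{k=1}^{K}$ and an $o_K(1)$ remainder once the later integration is done. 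Inserting the explicit formula for $S(t)$ into $\frac{2}{\pi}\int_{0}^{T}|S(t)|^{2}\,dt$ and multiplying out: $|S_{\mathrm{main}}|^{2}$ gives $(1+o(1))T^{-2\alpha}\log T$; the mean squares of the prime-power Dirichlet polynomials are evaluated by the mean value theorem for Dirichlet polynomials (Montgomery--Vaughan) together with the prime number theorem (in the form $\sum_{n\le y}\Lambda(n)^2/n\sim\tfrac12(\log y)^2$ and its companions) — here $|\alpha|<1$ is needed, so that $x=T^{\alpha}=o(T)$ swallows the error terms — and they contribute $|\alpha|$; and the terms built from the $g'/g$-expansion, after the same evaluation applied to each of the $K$ truncated pieces, contribute $-4|\alpha|^2+\sum_{k=1}^{K}\frac{(k-1)!}{(2k)!}(2|\alpha|)^{2k+1}$, the coefficients $\frac{(k-1)!}{(2k)!}$ emerging when the $n$-sums are converted, via the prime number theorem, into integrals of the type $\int_{0}^{1}u^{\,a}(1-u)^{\,b}\,du$. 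Cross-terms are negligible by the usual observation that the relevant Dirichlet polynomials carry no frequency at $1$. Dividing by $N(T)=N_1(T)+O(1)\sim\frac{T}{2\pi}\log T$ and replacing $\alpha$ by $|\alpha|$ completes the proof.

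The step I expect to be the main obstacle is the analysis of the $g'/g$-contribution. In Montgomery's problem $-\zeta'/\zeta$ is a pure Dirichlet series, whereas here one must control the extra factor $g'/g=(\xi'/\xi)'/(\xi'/\xi)$, whose behaviour is governed by the size of $\xi'/\xi$ — comparable to $\log t$ on $\Re s=c$, but a delicate object in intermediate ranges — expand it, carry it through all the contour integrals, and verify that it reassembles, after the $L^2$-integration, into exactly the stated polynomial-plus-truncated-series correction, uniformly in $T$ and with controlled dependence on $K$. Keeping track of the cancellation between individually large quantities (those from the kernel poles at $s=v\pm1$ against those from the lines $\Re s=c$ and $\Re s=1-c$) is the technical core; a secondary difficulty is the passage between the restricted and full sums over $\gamma$ and the truncation of the $t$-integral in the presence of the slowly decaying kernel $1/(1+(t-\gamma)^2)$.
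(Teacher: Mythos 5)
Your proposal follows essentially the same route as the paper: the same two-pole kernel and contour argument yielding an explicit formula for $\sum_\gamma x^{\i\gamma}/(1+(t-\gamma)^2)$, the same decomposition $\xi''/\xi' = \xi'/\xi + (\xi'/\xi)'/(\xi'/\xi)$ with the second term expanded as a geometric series in $(\zeta'/\zeta)(s)/L(s)$ truncated at $K$ terms (this is exactly how the finite $k$-sum and the $o_K(1)$ arise in the paper), and the same Montgomery-style mean-square evaluation in which the convolution coefficients $\Lambda_{k-1}*\Lambda\log$ are summed via the prime number theorem and Euler beta integrals to produce the constants $(k-1)!/(2k)!$. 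The plan is sound and correctly isolates the genuinely new technical work (the ``approximate Dirichlet series'' with $s$-dependent coefficients and the arithmetic sums $\sum_{n\le x}\Lambda_k(n)\Lambda_\ell(n)$), so no further comparison is needed.
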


This theorem is an analogue of Hugh Montgomery's result \cite{M} on the
pair correlation of zeros of the $\zeta$-function.
He considered the function
\begin{equation}\label{eqn:F}%
F (\alpha, T) = N(T)^{-1}
\sum_{0 < \g_0, \g_0' \leq T} T^{\i \alpha (\g_0 -
\g_0)} w(\g_0  - \g_0')\;,
\end{equation}
where the sum is over pairs of ordinates of zeros of $\xi(s)$ and
$w(u)= 4/(4 + u^2)$.  (We use $\g_0$ for zeros
of $\xi$ because $\g$ refers to zeros of $\xi'$ in this paper).
In the terminology of Random Matrix Theory (RMT), $F(\alpha;T)$ is
called the ``2-point form factor'', although sometimes it is mistakenly
referred to as the pair correlation function.  In fact, $F(\alpha;T)$ is
the Fourier transform of the pair correlation function.
Montgomery proved that $F(\alpha;T)$ has main term  $T^{-2 |\alpha|}\log T + |\alpha|$
for $|\alpha|<1$.  That is, $F(\alpha;T)$ behaves like a Dirac $\delta$-function
at~$0$ and is asymptotically $|\alpha|$ when $\varepsilon<|\alpha|<1$.
We see the same $\delta$-like behavior in $F_1(\alpha; T)$; this  is not surprising
since the spike at $\alpha=0$ is simply a consequence of the density of zeros and the weight function.
The behavior  of $F$ and $F_1$ for $0<|\alpha|<1$, however,  is quite different, as illustrated in Figure~\ref{fig:F1}.

\begin{figure}[htp]
\begin{center}
\scalebox{0.75}[0.75]{\includegraphics{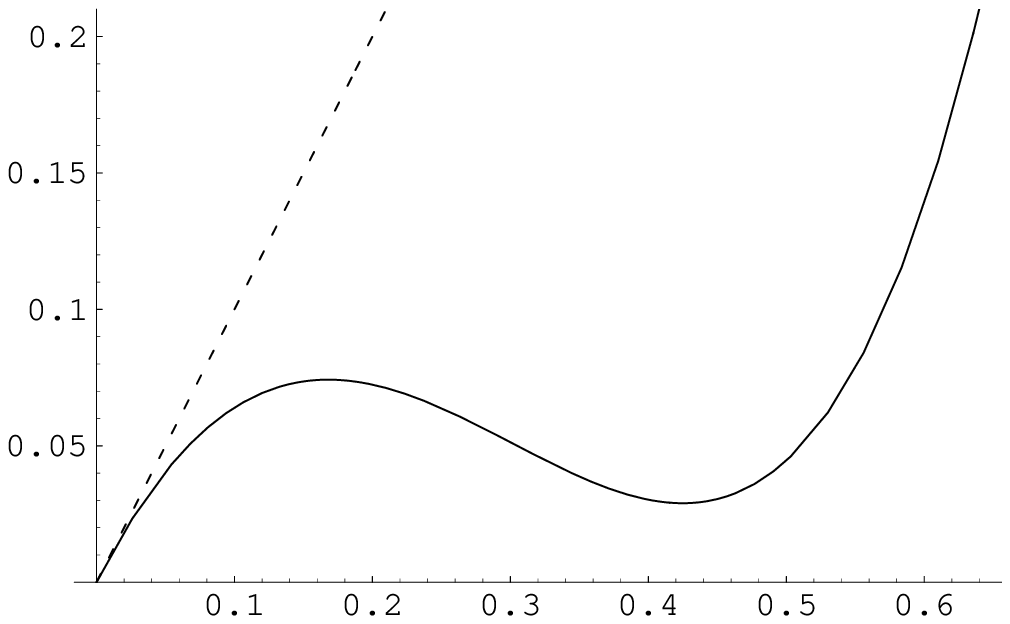}}
\hskip 0.25in
\scalebox{0.75}[0.75]{\includegraphics{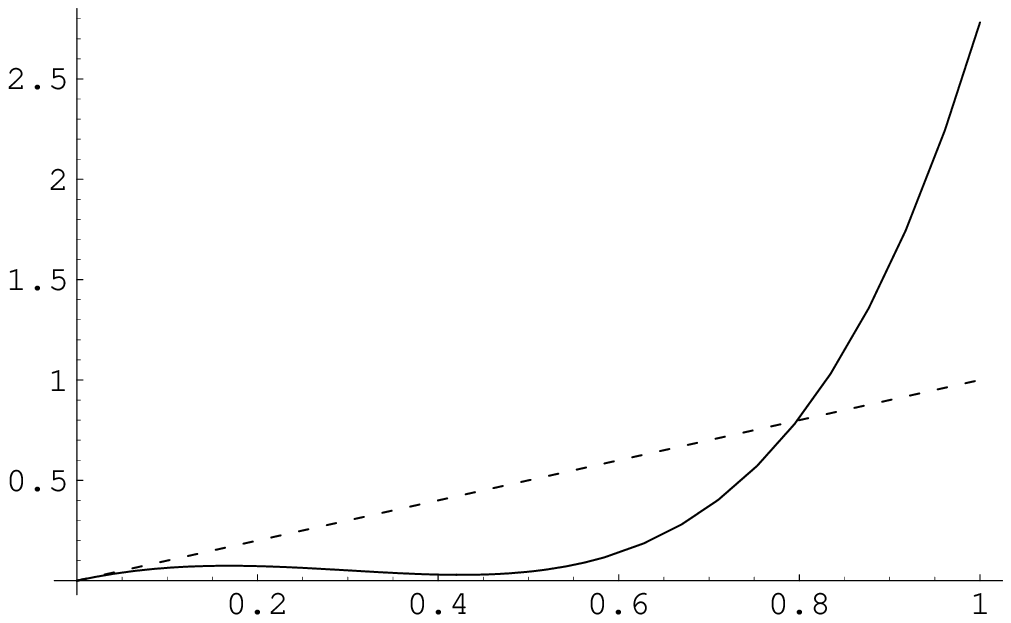}}
\caption{\sf
$F_1(\alpha; T)$ for $0<\alpha<0.64$ and
$0<\alpha<1$.
For comparison, the dotted line in both plots is the form factor $F(\alpha; T)$
for the zeros of the $\xi$-function.
} \label{fig:F1}
\end{center}
\end{figure}

Montgomery used his result on $F(\alpha; T)$ to obtain various estimates on the 
spacing and multiplicity of zeros of the $\xi$-function.
Applying the same methods
to Theorem~\ref{thm:F1} we obtain

\begin{corollary}\label{cor:xipgaps}  Assume RH.
A positive proportion of gaps between zeros of $\xi'$ are
less than 0.91 times the average spacing, and
more than 3.5\% of the normalized neighbor gaps
between zeros of $\xi'$ are smaller than average.
\end{corollary}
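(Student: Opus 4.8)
The plan is to carry over Montgomery's deduction of zero-spacing statistics from the form factor, with his universal density $|\alpha|$ replaced by the density
\begin{equation*}
g(\alpha)=|\alpha|-4\alpha^{2}+\sum_{k\ge 1}\frac{(k-1)!}{(2k)!}\,(2|\alpha|)^{2k+1}
\end{equation*}
produced by Theorem~\ref{thm:F1}. (The series converges for every $\alpha$, so once a test function is fixed one may pass from the partial sum $\sum_{k=1}^{K}$ to the full series and discard the $o_K(1)$; Theorem~\ref{thm:F1} then reads $F_1(\alpha,T)=(1+o(1))T^{-2|\alpha|}\log T+g(\alpha)+o(1)$ for $|\alpha|<1$.) The single input I would draw from it is this: for an even $r\in L^1(\mathbb{R})$ with $\hat r\ge 0$, unfolding the definition of $F_1$ gives
\begin{equation*}
\frac{1}{N_1(T)}\sum_{0<\gamma,\gamma'\le T}r\!\Big((\gamma-\gamma')\tfrac{\log T}{2\pi}\Big)w(\gamma-\gamma')=\int_{\mathbb{R}}\hat r(\alpha)F_1(\alpha,T)\,\d\alpha\ \ge\ \int_{-1}^{1}\hat r(\alpha)F_1(\alpha,T)\,\d\alpha,
\end{equation*}
the inequality because $F_1\ge 0$ for all $\alpha$ (the weight $w$ is positive definite, so the sum is a sum of squares); inserting Theorem~\ref{thm:F1} on $[-1,1]$ and using that $T^{-2|\alpha|}\log T$ acts like a unit point mass at the origin, the right side tends to $\hat r(0)+\int_{-1}^{1}\hat r(\alpha)g(\alpha)\,\d\alpha$, with equality in the limit if $\hat r$ is supported in $[-1,1]$.

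First I would record the diagonal consequence, since Montgomery's gap arguments feed on it. Separating the terms with $\gamma=\gamma'$ contributes $r(0)\,w(0)\,N_1(T)^{-1}\sum_{\rho}m_\rho^{2}$, where $m_\rho$ is the multiplicity of a zero $\rho$ of $\xi'$; dropping the remaining non-negative terms in the equality case (with $\hat r(\alpha)=(1-|\alpha|)_+$, or a better choice) yields $\limsup_T N_1(T)^{-1}\sum_\rho m_\rho^{2}\le 1+\int_{-1}^{1}(1-|\alpha|)g(\alpha)\,\d\alpha=:B$. As in \cite{M} this already gives a positive proportion of simple zeros of $\xi'$; the same estimate also limits how tightly zeros of $\xi'$ can clump at scale $1/\log T$ (with the crude pointwise bound coming from RH and the zero-counting function of $\xi'$).

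For the gap statements I would run the usual pair/second-moment argument. Fix $\lambda>0$ and let $n(x)$ count the zeros of $\xi'$ with ordinate in $[x,x+2\pi\lambda/\log T)$; then $\int_0^T n(x)\,\d x=\lambda T+o(T)$, while
\begin{equation*}
\int_0^T n(x)^{2}\,\d x=\frac{2\pi\lambda}{\log T}\sum_{0<\gamma,\gamma'\le T}r_\lambda\!\Big((\gamma-\gamma')\tfrac{\log T}{2\pi}\Big)+O(1),\qquad r_\lambda(u)=(1-|u|/\lambda)_+,\ \ \hat r_\lambda\ge 0.
\end{equation*}
The normalized differences occurring here are $o(1)$, so replacing $w$ by $1$ costs a factor $1+O(1/\log^{2}T)$, and the inequality of the first paragraph bounds the second moment below through $\hat r_\lambda(0)+\int_{-1}^{1}\hat r_\lambda g$; more generally, taking $r$ supported in $[-\lambda,\lambda]$ gives a lower bound $\gg N_1(T)$ for the number of pairs $(\gamma,\gamma')$ of zeros of $\xi'$ with $0<\gamma'-\gamma\le 2\pi\lambda/\log T$ as soon as $\hat r(0)+\int_{-1}^{1}\hat r\,g>B\cdot\max_{[-\lambda,\lambda]}r$. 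Converting this, via the control on clumping, into a lower bound for the number of \emph{consecutive} gaps $\gamma_{n+1}-\gamma_n\le 2\pi\lambda/\log T$, and then optimizing $r$ against the explicit $g$ of Theorem~\ref{thm:F1}, one finds the count is a positive proportion of $N_1(T)$ for every $\lambda\ge 0.91$, and that at $\lambda=1$ the proportion exceeds $3.5\%$.

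The conceptual content sits entirely in the displayed inequality of the first paragraph, which is immediate from Theorem~\ref{thm:F1}; so the real work in the corollary is elsewhere. The main obstacle I expect is the extremal problem for the test function: one must choose $r$ (even, non-negative, supported in $[-\lambda,\lambda]$, with $\hat r\ge 0$) so as to make $\hat r(0)+\int_{-1}^{1}\hat r(\alpha)g(\alpha)\,\d\alpha$ as large as possible relative to both $\max r$ and $B$, and it is only through a good---not merely convenient---choice that the numbers $0.91$ and $3.5\%$ are reached; this is a concrete variational/numerical computation with the particular $g$, and it is where the constants in the statement are determined. A subsidiary technical point, handled as in \cite{M}, is the passage from ``many close pairs of zeros'' to ``many small consecutive gaps'', which uses the local density of the zeros of $\xi'$, together with checking that the slowly decaying weight $w(u)=4/(4+u^{2})$ contributes negligibly through pairs with $|\gamma-\gamma'|\gg 1$.
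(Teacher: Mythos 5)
Your proposal is correct and follows essentially the same route as the paper, which proves this corollary only by invoking Montgomery's methods from \cite{M} applied to Theorem~\ref{thm:F1}: the convolution identity $N_1(T)^{-1}\sum r\bigl((\gamma-\gamma')\tfrac{\log T}{2\pi}\bigr)w(\gamma-\gamma')=\int\hat r\,F_1$, positivity of $F_1$, the diagonal bound $\sum_\rho m_\rho^2\le (1+\int_{-1}^{1}(1-|\alpha|)g)N_1(T)$ (which indeed evaluates to $1.1416\,N_1(T)$, matching the $85.84\%$ of Corollary~\ref{cor:xipsimple}), and the second-moment/close-pair argument with an optimized Fej\'er-type test function to extract the constants $0.91$ and $3.5\%$. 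Your outline is in fact more detailed than the paper's, which supplies no computation for this corollary beyond the phrase ``applying the same methods.''
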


\begin{corollary}\label{cor:xipsimple}  Assume RH.
More than 85.84\% of the zeros of $\xi'$ are simple.
\end{corollary}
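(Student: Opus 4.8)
The plan is to run Montgomery's argument for simple zeros, with Theorem~\ref{thm:F1} substituted for his evaluation of $F(\alpha,T)$. Recall that on RH all zeros of $\xi'$ lie on the critical line, so distinct zeros have distinct ordinates; list them as $\r=\tfrac12+\i\g$ and let $m_\g\ge 1$ be the multiplicity of the zero with ordinate $\g$, so that $\sum_{0<\g\le T}m_\g=N_1(T)$. I would fix an even function $r$, supported in $[-1,1]$, whose Fourier transform $\hat r$ is nonnegative; the model choice is the triangle $r(\alpha)=\max(1-|\alpha|,0)$, for which $\hat r(u)=(\sin\pi u/\pi u)^2\ge 0$ and $r(0)=\hat r(0)=1$. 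Pairing the definition \eqref{F_1} of $F_1$ with $r$ (writing $T^{\i\alpha(\g-\g')}=e^{\i\alpha(\g-\g')\log T}$ and recognizing the Fourier transform) gives the identity
\begin{equation*}
\sum_{0<\g,\g'\le T}\hat r\!\left(\frac{(\g-\g')\log T}{2\pi}\right)w(\g-\g')=N_1(T)\int_{-\infty}^{\infty}F_1(\alpha,T)\,r(\alpha)\,\d\alpha=N_1(T)\int_{-1}^{1}F_1(\alpha,T)\,r(\alpha)\,\d\alpha,
\end{equation*}
the restriction of the integral to $[-1,1]$ being automatic since $r$ is supported there — which is exactly the range in which Theorem~\ref{thm:F1} applies.

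On the left, the diagonal terms $\g=\g'$ contribute $\hat r(0)w(0)\sum_{0<\g\le T}m_\g^2=\sum_{0<\g\le T}m_\g^2$ (using $w(0)=1$, $\hat r(0)=1$), and every off-diagonal term is $\ge 0$ because $\hat r\ge 0$ and $w\ge 0$; hence $\sum_{0<\g\le T}m_\g^2\le N_1(T)\int_{-1}^1 F_1(\alpha,T)r(\alpha)\,\d\alpha$. Next I would evaluate the right side using Theorem~\ref{thm:F1}: the term $(1+o(1))T^{-2|\alpha|}\log T$ is an approximate identity on $[-1,1]$ — its mass there tends to $1$ and concentrates at $\alpha=0$ — so it contributes $r(0)+o(1)=1+o(1)$; the remaining terms contribute $\int_{-1}^{1}\bigl(|\alpha|-4\alpha^2+\sum_{k=1}^{K}\tfrac{(k-1)!}{(2k)!}(2|\alpha|)^{2k+1}\bigr)r(\alpha)\,\d\alpha+o_K(1)$, and on letting $T\to\infty$ and then $K\to\infty$ the inner sum increases to the convergent series. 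Writing $G(\alpha)=|\alpha|-4\alpha^2+\sum_{k\ge1}\tfrac{(k-1)!}{(2k)!}(2|\alpha|)^{2k+1}$, this yields
\begin{equation*}
\sum_{0<\g\le T}m_\g^2\le\bigl(M(r)+o(1)\bigr)N_1(T),\qquad M(r)=\frac{1}{\hat r(0)}\left(r(0)+\int_{-1}^{1}G(\alpha)\,r(\alpha)\,\d\alpha\right).
\end{equation*}
Finally, for each $\g$ one has $2m_\g-m_\g^2=m_\g(2-m_\g)$, which is $1$ when $m_\g=1$ and $\le 0$ when $m_\g\ge 2$, so summing over $\g$ gives $\sum_{0<\g\le T}(2m_\g-m_\g^2)\le N_1^{s}(T)$, where $N_1^{s}(T)$ counts the simple zeros; together with $\sum m_\g=N_1(T)$ this gives $N_1^{s}(T)\ge 2N_1(T)-\sum m_\g^2\ge(2-M(r)-o(1))N_1(T)$. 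Thus a proportion at least $2-M(r)$ of the zeros of $\xi'$ are simple.

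It remains to choose $r$ so that $2-M(r)>0.8584$. With the triangle $r$ above, $\int_{-1}^1(|\alpha|-4\alpha^2)r\,\d\alpha=\tfrac13-\tfrac23$ and the series term integrates to $\sum_{k\ge1}\tfrac{(k-1)!\,2^{2k+2}}{(2k)!\,(2k+2)(2k+3)}=0.47\ldots$, giving $M(r)=1.14\ldots$ and the bare bound $2-M(r)=0.858\ldots$; replacing the triangle by a slightly better positive-definite kernel supported in $[-1,1]$ — the same kind of refinement Montgomery and later authors use — pushes the constant past $0.8584$. The hard part will be precisely this last step: one must sum the series defining $G$ and then solve (or exhibit a good near-optimal solution to) the variational problem of minimizing the ratio $M(r)$ subject to the positive-definiteness constraint $\hat r\ge 0$ on $r$. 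A subsidiary technical point is the bookkeeping of the two limits: for each fixed $K$ Theorem~\ref{thm:F1} provides an error $o_K(1)$, so one first sends $T\to\infty$ with $K$ fixed and large enough that the tail $\sum_{k>K}$ of the series, integrated against $r$, is as small as desired, and only afterward lets $K\to\infty$. The Parseval identity and the combinatorial inequality in the earlier steps are entirely routine once Theorem~\ref{thm:F1} is in hand.
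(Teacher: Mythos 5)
Your proposal is correct and is precisely the Montgomery-type argument the paper invokes without writing out: pair $F_1(\alpha,T)$ with the Fej\'er kernel (triangle $r$, $\hat r\ge 0$), bound the diagonal $\sum m_\gamma^2$ by dropping the nonnegative off-diagonal terms, and convert via $N_1^{s}(T)\ge 2N_1(T)-\sum m_\gamma^2$. The one point worth flagging is numerical: the triangle kernel gives $2-M(r)=0.858384\ldots$, so the stated ``more than $85.84\%$'' is really the rounded value of this Fej\'er-kernel bound (strictly one gets ``more than $85.83\%$'' unless one optimizes the kernel slightly, as you note).
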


It is not surprising that the first corollary is weaker than the
corresponding result for the $\xi$-function and the
second is stronger.
The reasons are discussed in the next section.

Conrey~\cite{C} has shown unconditionally that  at least  79.874\% of
the zeros of~$\xi'$ are simple and on the critical line. Thus,
Corollary~\ref{cor:xipsimple} gives a conditional improvement of his result.  
Inserting this estimate into formula~(6) of Farmer~\cite{F}
improves (again on RH) the unconditional estimate there of $(0.63952 + o(1)) N(T)$ 
for the number of  distinct zeros of the $\xi$-function.

\begin{corollary}\label{cor:xidistinct}  Assume RH.
The number of distinct zeros of the zeta-function in $[0,T]$
is larger than $(0.6544 + o(1)) N(T)$.
\end{corollary}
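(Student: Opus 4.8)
The plan is to deduce Corollary~\ref{cor:xidistinct} by feeding the bound of Corollary~\ref{cor:xipsimple} into the ready-made inequality recorded as formula~(6) of~\cite{F}. Since $\xi$ and $\zeta$ have exactly the same complex zeros, ``distinct zeros of the zeta-function'' means ``distinct zeros of $\xi$''; write $N^*(T)$ for the number of these with ordinate in $[0,T]$. First I would recall why such a formula holds. On RH the function $\Xi(t)=\xi(\tfrac12+\i t)$ is a real entire function of order~$1$ all of whose zeros are real, hence it lies in the Laguerre--P\'olya class; therefore so does $\Xi'$, and the zeros of $\Xi$ and $\Xi'$ interlace. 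Spelled out: between two consecutive \emph{distinct} zeros of $\xi$ there is exactly one zero of $\xi'$, and it is simple (the logarithmic derivative $\Xi'/\Xi$ is strictly decreasing on each such interval, running from $+\infty$ to $-\infty$), while a zero of $\xi$ of multiplicity $m$ is a zero of $\xi'$ of multiplicity exactly $m-1$.

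Counting in $[0,T]$, these facts say that the number of \emph{simple} zeros of $\xi'$ equals $N^*(T)$ plus the number of double zeros of $\xi$, up to an error $O(1)$ from the endpoints, and that every non-simple zero of $\xi'$ arises from a zero of $\xi$ of multiplicity $\ge 3$ and then carries multiplicity $\ge 2$. Bounding the number of double zeros of $\xi$ in terms of the number of simple zeros of $\xi$, bounding the higher-multiplicity contribution in terms of the non-simple zeros of $\xi'$, and inserting the unconditional lower bound for the proportion of simple zeros of $\zeta$ used in~\cite{F}, produces exactly formula~(6) of~\cite{F}: a lower bound $N^*(T)\ge (c+o(1))\,N(T)$ in which $c$ is an explicit (affine, increasing) function of any proven lower bound for the proportion of simple zeros of $\xi'$. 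Because $N_1(T)=N(T)+O(1)$, Corollary~\ref{cor:xipsimple} supplies the value $0.8584$ for that proportion under RH, in place of Conrey's unconditional $0.79874$; substituting and evaluating gives $c>0.6544$, which is the assertion.

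The step requiring the most care --- though it is standard --- is the interlacing input: one must justify, for the Laguerre--P\'olya function $\Xi$ rather than for a polynomial, that each gap between consecutive distinct zeros of $\xi$ contains exactly one, necessarily simple, zero of $\xi'$, and then carry out the multiplicity bookkeeping, absorbing the $O(1)$ endpoint discrepancies into the $o(N(T))$ error via $N_1(T)=N(T)+O(1)$. Beyond that there is no analytic obstacle: formula~(6) is proved (unconditionally) in~\cite{F}, RH only sharpens the interlacing picture and automatically places all zeros of $\xi'$ on the critical line, and the corollary follows by substituting the proportion $0.8584$ from Corollary~\ref{cor:xipsimple} for Conrey's $0.79874$ and evaluating.
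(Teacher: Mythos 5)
Your operative step is exactly the paper's: the corollary is obtained by substituting the proportion $0.8584$ from Corollary~\ref{cor:xipsimple} (in place of Conrey's unconditional $0.79874$) into formula~(6) of Farmer~\cite{F}, and the paper offers nothing beyond that one-line deduction. So in substance the proposal matches the intended proof. One caution, though: your interlacing ``re-derivation'' of formula~(6) is not how \cite{F} obtains it and would not reproduce the constants. Under RH the interlacing bookkeeping gives (with $N_d$ the number of distinct zeros, $N_2$ the number of double zeros, and $S$ the number of simple zeros of $\xi'$) only $S = N_d + N_2 + O(1)$, and this constraint is very weak --- e.g.\ if every zero of $\xi$ were double, every zero of $\xi'$ would be simple --- so combining it with a lower bound $\beta N(T)$ for the simple zeros of $\zeta$ yields at best roughly $\tfrac12(\beta+0.8584)N(T)\approx 0.63\,N(T)$, short of $0.6544$ and with the wrong dependence on the $\xi'$ input (the two data points $0.79874\mapsto 0.63952$ and $0.8584\mapsto 0.6544$ show formula~(6) has slope about $\tfrac14$, not $\tfrac12$, in that variable). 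Moreover formula~(6) is unconditional, so it cannot rest on RH-interlacing; it draws on further inputs from Conrey's work. Since you do ultimately defer to \cite{F} for the formula itself, this does not invalidate the corollary, but the sketch should be dropped or flagged as heuristic rather than claimed to ``produce exactly formula~(6).''
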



In the next section we discuss the motivations for our work arising from the
distribution of zeros of entire functions and the problem of Landau-Siegel zeros.
In section~\ref{sec:explicit} we state an explicit formula relating the zeros
of $\xi'$ to  prime numbers, and in section~\ref{sec:startofproof} we begin the
proof of Theorem~\ref{thm:F1} and identify the main terms.
In section~\ref{sec:endofproof} we complete the proof, except for an
arithmetic proposition which is proven in section~\ref{sec:arithmetic}.
In section~\ref{sec:explicitformula} we prove the explicit formula  
used in section~\ref{sec:explicit}. One important investigation we have not carried out 
here is the heuristic determination of  $F_1(\alpha; T)$ when $|\alpha| \geq 1$.  


\section{The Alternative Hypothesis and the process of differentiation}\label{sec:motivation}

Montgomery's
study of the statistical behavior of zeros of the Riemann zeta-function 
 was motivated by the problem of Landau-Siegel zeros.  The idea
is that a real zero very close to~$1$ of $L(s,\chi_d)$ would have a
profound effect on the zeros of the Riemann zeta-function:  in a certain
range all the zeros would be on the critical line and would have a 
peculiar spacing. 
Set
\begin{equation}
\tilde{\g}_0=\frac{1}{2\pi}\g_0 \log\left(\frac{\g_0}{2\pi}\right) 
\end{equation}
and denote consecutive zeros of the zeta-function by $\g_0\le\g_0^+$,
so that $\tilde{\g_0}^+ - \tilde{\g_0} $ is 1 on average.
The existence of a Siegel zero implies that in a
certain range almost all the zeros of the zeta-function
satisfy $\tilde{\g}_0^+ - \tilde{\g}_0 > \frac12-\varepsilon$.
Thus, one could
disprove the existence of Landau-Siegel zeros by showing that
$\tilde{\g}_0^+ - \tilde{\g}_0 \le 0.49$, say,  
sufficiently often.
Montgomery's result, however, only allows one to conclude that
$\tilde{\g}_0^+ - \tilde{\g}_0 \le 0.63$
a positive proportion of the time.

Montgomery refers to the connection to Landau-Siegel zeros in his paper~\cite{M},
and similar connections are mentioned in unpublished work of Heath-Brown.  At present
the only published account is due to Conrey and Iwaniec~\cite{CI}.
They show that the existence of Landau-Siegel zeros implies
that, in a certain
range, most of the spacings between consecutive zeros of the zeta function
are close to multiples
of half the average spacing.
  That is, $\tilde{\g}_0^+ - \tilde{\g_0}$ is close to
$\frac12$ or $1$, or $\frac32$, etc.
Although Conrey and Iwaniec give explicit estimates only in the case of
small spacings between zeros, 
their main result 
can be used to show, for example, that if $\tilde{\g}_0^+ - \tilde{\g_0}$
was often close to $0.8$, then there are
no Landau-Siegel zeros.
The estimates in such cases, however,  have not been worked out yet.

If the statistics of the zeros of the zeta-function are governed by 
random matrix theory (RMT),
then one could immediately conclude there are no  Landau-Siegel zeros
because the neighbor spacing is supported on all of $(0,\infty)$.
Since there are no immediate prospects of proving that the zeros of
the zeta-function follow random matrix statistics (or disproving
Landau-Siegel zeros by another method),
it is interesting to probe the boundary of what 
distributions are possible for zeros of the zeta-function.
The following has been proposed as a test case:

\medskip
\noindent\textbf{The Alternative Hypothesis (AH)}
{\sl 
There exists a real number $T_0$ such that if $\g_0>T_0$, then
\begin{equation}
\tilde{\g}_0^+ - \tilde{\g}_0 \in \frac{1}{2} \mathbb Z.
\end{equation}
That is, almost all the normalized neighbor spacings are
an integer or half-integer.
}
\medskip

One can also formulate weaker versions, where the normalized
spacings are approximately integers or half-integers.

\subsection{Consequences of AH}

AH is obviously absurd, but it has not
been disproven.  A~sufficiently strong disproof would show
that there are no Landau-Siegel zeros.
AH implies that Montgomery's function $F(\alpha;T)$ is periodic
with period~two.  Thus, on AH the graph of $F(\alpha;T)$ would look like this:

\begin{figure}[htp]
\begin{center}
\scalebox{1.25}[1.25]{\includegraphics{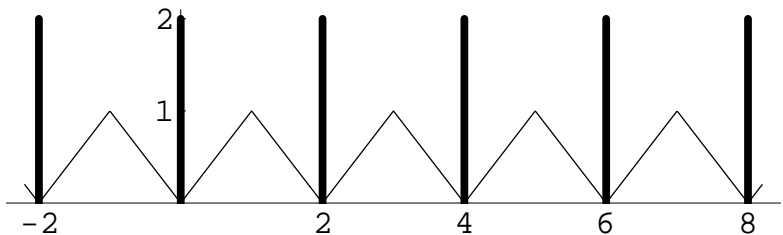}}
\caption{\sf
$F(\alpha;T)$ on the Alternative Hypothesis (the heavy vertical
lines represent the Dirac $\delta$-functions at the even integers).
} \label{fig:AH}
\end{center}
\end{figure}

Similarly, the triple correlation function form factor  $F(\alpha,\beta;T)$ 
(see \cite{H,RS,FG}) would be periodic with period two in both
$\alpha$ and $\beta$.  Unfortunately, this does not seem to completely
determine $F(\alpha,\beta;T)$ because $F(\alpha,\beta;T)$ is only
known on the hexagon $|\alpha|<1$, $|\beta|<1$, $|\alpha - \beta |<1$.

If we let $g_\mu$ denote the proportion of $\tilde{\g}_0$ such that
$\tilde{\g}_0^+ - \tilde{\g}_0=\mu$, a straightforward calculation
assuming AH leads to  $g_0=0$, $g_{\frac12}=\frac12 - \frac{2}{\pi^2} \approx 0.297$,
and $0.405\approx \frac{4}{\pi^2} \le g_1 \le \frac12 $.
Thus, one could disprove AH by showing that more than 30\% of
the normalized neighbor gaps of $\xi$ are less than $0.999$.
We note that RMT predicts that 53.39\% of the neighbor gaps are smaller
than average.  Montgomery's result implies that more than 12.3\%
of the neighbor gaps are smaller than average (set $\lambda=1-\varepsilon$ at
the bottom of page 192 of~\cite{M}).   Corollary~\ref{cor:xipgaps} gives
information about consecutive small gaps between zeros of~$\xi$.
This is discussed further in the next subsection.

One wonders whether AH and the existing results on
zero correlations determine the distribution of neighbor spacings.
To specify $g_1$ seems to require using the triple correlation
$F(\alpha,\beta;T)$ to determine how often  two consecutive
normalized neighbor gaps of size~$\frac12$ can occur.  So this question may be
equivalent to the question of whether AH determines all the
correlation functions.  


Goldston and Montgomery \cite{GM} showed that the pair correlation conjecture
is equivalent to
\begin{equation}\label{eqn:primevar}
\int_1^{X} \left(\psi(x+h)-\psi(x)-h \right)^2
\,dx  \sim h X \log\left(\frac{X}{h}\right),
\end{equation}
for $h$ in a certain range depending on~$X$.
Here  $\psi(x)=\sum_{n\le x}\Lambda(n)$, where
$\Lambda$ is the von~Mangoldt function:
$\Lambda(n)=\log p$ if $n=p^m$ with $p$ prime, and $0$ otherwise.
Montgomery and Soundararajan~\cite{MS} interpret \eqref{eqn:primevar}
as saying 
$\psi(x+h)-\psi(x)$ has mean $h$ and variance $h \log\left(\frac{X}{h}\right)$,
and they note that the Cram\'er model of the primes predicts a larger
variance of $h \log{X}$.
It would be interesting to see what the right-hand  side of 
\eqref{eqn:primevar} equals if one assumes the Alternative Hypothesis.
   
These connections
indicate the value of studying the statistics of the zeros of the
zeta-function.  We now explain the connection to the zeros of~$\xi'$.


\subsection{Zeros of derivatives}  The statistics of the zeros of
$\xi'$ are interesting because of their connection with the Alternative Hypothesis 
and also as an illustration of the  general behavior of  the zeros of  
derivatives of an entire function upon repeated differentiation.

One motivation for studying the analogue of Montgomery's
function for the zeros of $\xi'$ is the expectation
that our Theorem~\ref{thm:F1} might contradict~AH.  The zeros of
$\xi'$ are influenced by the zeros of $\xi$ in complicated ways,
so it seems unlikely that RMT and AH would predict the same behavior
for $F_1(\alpha;T)$ for $|\alpha|<1$.  But there are several caveats.
First, as described in the previous section, it is not known whether
or not AH determines all the correlation functions of the zeros.
This may lead to some flexibility in  $F_1(\alpha;T)$ for $|\alpha|<1$,
which may be consistent with Theorem~\ref{thm:F1}.
Second, it is not known how to transfer a measure on the zeros of
$\xi$ to a measure on the zeros of~$\xi'$.  Thus, even if AH determined all
the correlations of the zeros of~$\xi$, it is still an unsolved
problem to determine the correlations of the zeros of~$\xi'$.
Third, merely contradicting AH is not sufficient to disprove
the existence of Landau-Siegel zeros.
AH~is an extreme example of a possible consequence of Landau-Siegel zeros.
Presumably an extension of the work of Conrey and Iwaniec~\cite{CI}
would show that Landau-Siegel zeros imply that $F(\alpha;T)$ approximately
follows Figure~\ref{fig:AH} for some range of~$\alpha$.

Another motivation is to understand the general behavior of zeros under
differentiation.  The Riemann $\Xi$-function is defined as
$\Xi(z)=\xi(\frac12+i z)$.  The $\Xi$-function is an entire function
of order~1 that is real on the real axis.
For such functions, repeated differentiation causes the zeros
to migrate to the real axis~\cite{CCS, Kim, KKi}.  Thus, in any bounded region
the Riemann Hypothesis is true for the $n$th derivative
$\Xi^{(n)}(z)$ for sufficiently large~$n$.

It is conjectured~\cite{FR} that for real entire functions of order~1,
whose zeros lie in a strip around the real axis,
not only do the zeros migrate toward the real axis, but they also
approach equal spacing.  That is, the derivatives approach  a multiple of $e^{ax} 
 \cos(bx+c)$.  This conjecture
has been proven with some restrictions on the distribution
of zeros~\cite{FR} and for some special cases, such as the
$\Xi$-function~\cite{Ki} and the reciprocal of the gamma function~\cite{B}.

The reason differentiation leads to equally spaced zeros is that,
locally, the zeros of $f'$ move away from concentrations of zeros
of $f$ and towards regions with fewer zeros of~$f$.  Thus, small
gaps become larger and large gaps become smaller. Figure~\ref{fig:xiplot}
illustrates these ideas.

\begin{figure}[htp]
\begin{center}
\scalebox{1.3}[1.3]{\includegraphics{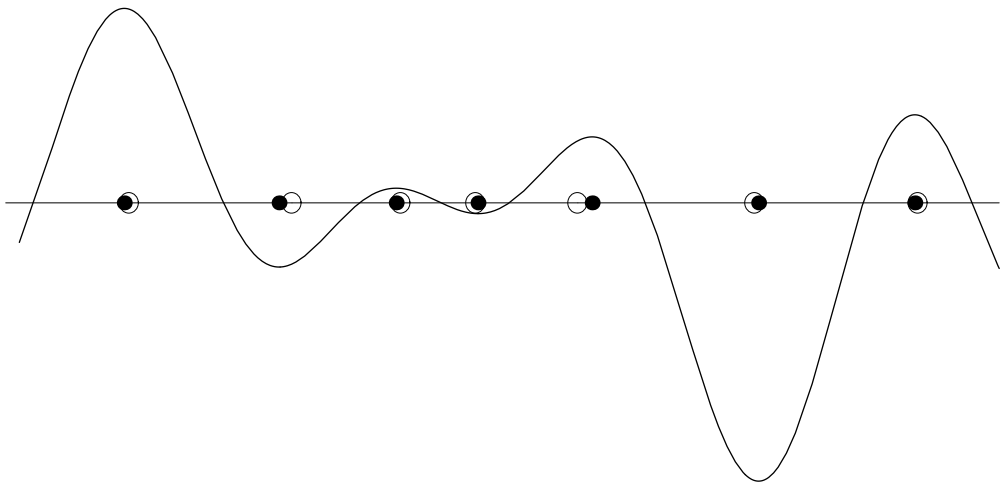}}
\caption{\sf The circles are the midpoints of neighboring zeros of~$f$,
and the dots are the zeros of~$f'$.} \label{fig:xiplot}
\end{center}
\end{figure}

A consequence is that if $\xi'$ has a small gap between consecutive zeros,
then $\xi$ must have had an even smaller gap.  Thus, one should expect that the
existence of Landau-Siegel zeros could be disproven by showing that
$\xi'$ has sufficiently many zeros separated by less than half
the average spacing.  That is, one should be able to extend the result of
Conrey-Iwaniec~\cite{CI} to the zeros of $\xi^{(n)}$ for any $n\ge 1$.
However, we have not worked out the necessary details to show that the
required number of small gaps between zeros of $\xi'$ leads to the
same number of small gaps between the zeros of~$\xi$. Thus, at present
we just mention this as a motivating principle.  See \cite{FR} for a
discussion.

Based on these ideas, we see that Corollary~\ref{cor:xipgaps} does
not contradict the Alternative Hypothesis.  On average each normalized
gap of size $\frac12$ is adjacent to a gap of size $\frac12$ or~$1$,
so AH implies that at least 29.7\% of the normalized neighbor gaps of $\xi'$ are
smaller than~1.  If Corollary~\ref{cor:xipgaps} could be improved to
show that $\xi'$ has normalized zero gaps smaller than~$0.75$, then that
would imply, on AH, that $\xi$ has consecutive gaps of size~$\frac12$.

Our final motivation is to understand the manner in which differentiation
causes the zeros to become equally spaced.  Let $F_n$ denote the
analogue of $F_1$  involving the zeros of~$\xi^{(n)}(z)$ 
rather than the zeros of ~$\xi'(z)$.  
Since the zeros of $\xi^{(n)}(z)$   approach 
equal spacing as $n$ increases,  $F_n$ approaches a sum of Dirac $\delta$-functions
supported at the integers.  We would like to understand this transition
to a sum of $\delta$-functions.
The only case we know of where this has been worked out explicitly
is for random trigonometric polynomials~\cite{FY}.  Theorem~\ref{thm:F1}
is a first step in this direction, and Figure~\ref{fig:F1} does seem
to illustrate the expected behavior.
In a recent Ph.D. thesis, Jim Bian has now
worked out explicit formulas for $F_n(\alpha;T)$ for $n\ge 2$ and
$|\alpha|<1$.

\subsection{Connection to RMT}
It would be interesting to know the random matrix analogue of Theorem~\ref{thm:F1}.
At present, this is one of the few calculations   carried out for the Riemann 
zeta-function for which a  random matrix analogue seems out of reach.
One issue is that in the random matrix world there seems to be no direct
analogue of the Riemann $\xi$-function. See Section 1.2 of \cite{CFKRS}
for a discussion. The closest match is
\begin{equation}\label{eqn:lambda}
z^{-n/2} \Lambda(z),
\end{equation}
where $\Lambda(z)$ is the characteristic polynomial of a random matrix
from the unitary group~$U(n)$, chosen uniformly with respect
to Haar measure.
However, this is more  nearly  an analogue of the Hardy
$Z$-function  defined by $Z(t)=\chi(\frac12+i t)^{-\frac12}\zeta(\frac12+i t)$.
This was the motivation of Conrey and Soundararajan~\cite{CS}, who did
similar calculations to ours for the $Z$-function.
But it turns out that the zeros of $\Xi'(t)$ and $Z'(t)$
should have similar correlation functions.
Since $\Xi(t)$ is approximately $e^{-\frac{\pi t}{2}} Z(t)$, the
corresponding zeros of $\Xi'(t)$ and $Z'(t)$ generally differ
by $O(1/\log^2 t)$.  So one should expect the correlation functions of
their zeros to be equal, to leading order.  We provide a more rigorous
explanation in Section~\ref{sec:Z}.  And those correlation
functions should equal, to leading order,
the correlation
function of the zeros of the derivative of
the completed characteristic polynomial~\eqref{eqn:lambda}.


\section{An explicit formula and first steps of the proof}\label{sec:explicit}

The general outline of our calculation is similar to the case
of zeros of the zeta-function.  We start with an explicit formula
relating a sum over zeros of $\xi'$ to a sum involving number-theoretic
expressions, and then compute the mean-square of both sides.
Our main complication is that the number-theoretic side of
the explicit formula is not a Dirichlet series, but
an ``approximate'' Dirichlet series, by which
we mean a Dirichlet series whose coefficients may depend on~$s$.

The explicit formula is derived in Section~\ref{sec:explicitformula}.
Here we just outline the calculation and state the formula.

We begin by noting that $\xi'' / \xi' (s)$ has simple poles  with
residue~1  at the zeros of $\xi' (s)$ and no others; here  
zeros are counted as many times as their multiplicity.  Our first goal is to write
$\xi'' / \xi' (s)$ as an ``approximate'' Dirichlet series.
From the definition of the $\xi$-function \eqref{xi} we have
\begin{equation}\label{xi'/xi}%
\frac{\xi'}\xi (s) =  L(s) + \frac{\zeta'}\zeta (s)  \;,
\end{equation}
where 
\begin{align}\label{L defn} %
L (s) =\mathstrut & \frac 1s + \frac 1{s-1} - \frac{\log \pi}2 +
\frac 12 \frac{\Gamma'}{\Gamma} \left(\frac{s}{2}\right)\cr
=\mathstrut & \frac 12 \log \frac s{2\pi} + O \left( \frac1{|s| +2} \right)
\end{align}
and
\begin{equation}\label{L ineq 2}%
 L' (s) \ll \frac 1{|s| +2}
\end{equation}
in the region $|\arg s| < \pi - \delta$, $|s| \geq 1/4$, say.

Multiplying both sides of \eqref{xi'/xi} by $\xi (s)$ and calculating the
logarithmic derivative leads to the following lemma, the proof
of which is in Section~\ref{sec:explicitformula}.
\begin{lemma}\label{lem:logderiv} For $\sigma \geq 1 + \varepsilon$, $|t| \geq T_\varepsilon$,
and $K$ a large positive integer
we have
\begin{equation}\label{Dir series}%
 \frac{\xi''}{\xi'} (s) = L(s) + \sum^\infty_{n=1}
 \frac{a_K(n,s)}{n^s} + O\left(\frac{1}{\varepsilon^2 2^K}\right)  \;.
\end{equation}
Here we have written
\begin{equation}
a_K (n,s) = \sum^{K}_{k=0}\frac{\a_k(n)}{L(s)^{k}},
\end{equation}
where
\begin{equation}\label{eqn:alphak}
\a_j(n) =
\begin{cases}
 -\Lambda(n) & \quad \hbox{if} \quad  j=0,\cr
\Lambda_{j-1} * \Lambda \log(n) &  \quad\hbox{if}  \quad j\ge 1 .
\end{cases}
\end{equation}
\end{lemma}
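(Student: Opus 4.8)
The goal is to express $\xi''/\xi'(s)$ as an approximate Dirichlet series in the region $\sigma\ge 1+\varepsilon$, $|t|\ge T_\varepsilon$. I would start from \eqref{xi'/xi}, which says $\xi'/\xi(s)=L(s)+\zeta'/\zeta(s)$. Multiplying by $\xi(s)$ gives $\xi'(s)=\xi(s)\bigl(L(s)+\zeta'/\zeta(s)\bigr)$, and differentiating, then dividing by $\xi'(s)$, yields
\[
\frac{\xi''}{\xi'}(s)=\frac{\xi'}{\xi}(s)+\frac{L'(s)+(\zeta'/\zeta)'(s)}{L(s)+\zeta'/\zeta(s)}
=L(s)+\frac{\zeta'}{\zeta}(s)+\frac{L'(s)+(\zeta'/\zeta)'(s)}{L(s)+\zeta'/\zeta(s)}.
\]
So the task reduces to expanding the last fraction. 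For $\sigma\ge 1+\varepsilon$ we have the absolutely convergent Dirichlet series $-\zeta'/\zeta(s)=\sum_n\Lambda(n)n^{-s}$ and $(\zeta'/\zeta)'(s)=\sum_n\Lambda(n)\log(n)\,n^{-s}$, and $\zeta'/\zeta(s)$ is bounded there (by a constant depending on $\varepsilon$), while $L(s)\asymp \log|t|$ is large once $|t|\ge T_\varepsilon$. Hence $|\zeta'/\zeta(s)/L(s)|$ is small, and I can expand
\[
\frac{1}{L(s)+\zeta'/\zeta(s)}=\frac{1}{L(s)}\sum_{k\ge 0}\Bigl(-\frac{\zeta'/\zeta(s)}{L(s)}\Bigr)^{k}
\]
as a geometric series, truncating at $k=K$ with a tail error controlled by the ratio, which I would arrange to be $\ll \varepsilon^{-2}2^{-K}$ after choosing $T_\varepsilon$ so that $|L(s)|$ is at least a fixed multiple of $|\zeta'/\zeta(s)|$ (a factor of $2$ suffices to produce the $2^{-K}$).

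**Identifying the coefficients.**
Multiplying the geometric series by the numerator $L'(s)+(\zeta'/\zeta)'(s)$ and combining with the leading $\zeta'/\zeta(s)=-\sum\Lambda(n)n^{-s}$ term, I would collect terms by powers of $1/L(s)$. The $L'(s)$ contribution is negligible: by \eqref{L ineq 2}, $L'(s)\ll (|s|+2)^{-1}$, so $L'(s)/L(s)^{k+1}$ is tiny and folds into the error term. The surviving main contributions come from $(\zeta'/\zeta)'(s)\cdot(-\zeta'/\zeta(s))^{k}/L(s)^{k+1}$. Since $(\zeta'/\zeta)'(s)=\sum_n\Lambda\log(n)\,n^{-s}$ and $(-\zeta'/\zeta(s))^{k}=\bigl(\sum_n\Lambda(n)n^{-s}\bigr)^k=\sum_n\Lambda_k(n)n^{-s}$ where $\Lambda_k=\Lambda*\cdots*\Lambda$ is the $k$-fold Dirichlet convolution, the product is $\sum_n(\Lambda_k*\Lambda\log)(n)\,n^{-s}$. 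Matching against $a_K(n,s)=\sum_{k=0}^K\alpha_k(n)L(s)^{-k}$ forces $\alpha_0(n)=-\Lambda(n)$ (from the bare $\zeta'/\zeta$ term) and $\alpha_j(n)=\Lambda_{j-1}*\Lambda\log(n)$ for $j\ge 1$, exactly as in \eqref{eqn:alphak} (with the convention $\Lambda_0=\delta_{n=1}$, so $\alpha_1=\Lambda\log$). Absolute convergence of every series in $\sigma\ge 1+\varepsilon$, uniformly, lets me rearrange freely and interchange the sum over $k$ with the sum over $n$.

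**The main obstacle.**
The only genuinely delicate point is the uniformity and size of the error term: I need the truncation error in the geometric series, summed against the Dirichlet coefficients and the $k$-sum, to be $O(\varepsilon^{-2}2^{-K})$ uniformly in $s$ with $\sigma\ge 1+\varepsilon$, $|t|\ge T_\varepsilon$. This requires (i) a lower bound $|L(s)|\ge 2\sup_{\sigma\ge 1+\varepsilon}|\zeta'/\zeta(s)|$ valid once $|t|$ exceeds a threshold depending on $\varepsilon$ — available from \eqref{L defn}, since $|L(s)|\sim\frac12\log(|t|/2\pi)$, against the bound $|\zeta'/\zeta(s)|\ll 1/\varepsilon$ on the line $\sigma=1+\varepsilon$ extended to the right; the $1/\varepsilon$ here, appearing quadratically once in numerator-type factors, is the source of the $\varepsilon^{-2}$; and (ii) uniform absolute convergence of $\sum_n|\Lambda_k*\Lambda\log(n)|n^{-\sigma}$, which follows from $\sum_n\Lambda_k*\Lambda\log(n)n^{-\sigma}=(-1)^k(\zeta'/\zeta(\sigma))^k(\zeta'/\zeta)'(\sigma)$ being finite and bounded for $\sigma\ge 1+\varepsilon$. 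Once these bookkeeping estimates are in place, assembling \eqref{Dir series} is routine.
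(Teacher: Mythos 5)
Your proposal is correct and follows essentially the same route as the paper: the logarithmic-derivative identity, the bounds $|\zeta'/\zeta(s)|\ll\varepsilon^{-1}$ and $|(\zeta'/\zeta)'(s)|\ll\varepsilon^{-2}$, the choice of $T_\varepsilon$ making $|\zeta'/\zeta(s)L(s)^{-1}|<\tfrac12$, the truncated geometric expansion producing the $\varepsilon^{-2}2^{-K}$ error, and the identification of the coefficients via $(-\zeta'/\zeta)^k=\sum_n\Lambda_k(n)n^{-s}$. No gaps.
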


The function $\Lambda_j$ for $j\ge 0$ is the $j$-fold convolution of
the von~Mangoldt function, defined by
\begin{equation}\label{eqn:vonM}
 \left( - \frac{\zeta'}\zeta (s) \right)^j = \sum^\infty_{n=1}
\frac{\Lambda_j (n)}{n^s} \,.
\end{equation} 
for $\sigma >1$.  Simple estimates of $\Lambda_j$ and $\alpha_k$ will be
used repeatedly.  We have the trivial bound
\begin{equation}\label{Lambda_j bound}
\Lambda_j (n) \leq (\log n)^j \qquad (j = 1,2, \dots) \,,
\end{equation}
which follows from iterating
\begin{equation}
\Lambda_j (n) = \sum_{d|n} \Lambda (d) \Lambda_{j-1} \left( \frac nd \right) \leq
\left( \max_{d|n} \Lambda_{j-1} \left( \frac nd \right) \right)
\sum_{d|n} \Lambda (d) \leq \Lambda_{j-1} (n^*)\log n 
\end{equation}
for some divisor $n^*$ of $n$. Similarly,
\begin{equation}\label{Alpha_k bound}
\a_k(n)= \Lambda_{k-1} * \Lambda \log(n)
 \leq (\log n)^{k+1} \qquad (k = 1,2, \dots)\,,
\end{equation}
from which it follows that
\begin{equation}\label{a_K (n,s) bd}%
|a_K (n,s)| \leq \log n  \sum^K_{k=0} \left( \frac{\log n}{|L(s)|}
 \right)^{k} \ll \log^{K+1}n
\end{equation}
when $-1 \leq \sigma \leq 2$, say.

A contour integral of $\xi''/\xi'$ times an appropriate kernel leads to the
following explicit formula; the details are in Section~\ref{sec:explicitformula}.

\begin{proposition}\label{prop:explicitformula} For
$5/4<\sigma< 2$, $x\geq 1$, $0< \varepsilon <1/8$, and $K$ a positive integer
\begin{equation}
\begin{aligned}
(2 \sigma - 1)\sum_{\gamma} &
\frac{x^{\i \gamma}}{(\sigma-1/2)^2 + (t- \gamma)^2} \\
= &  x^{-1/2} \left(
\sum_{n\leq x} a_K (n, 1-\bar s ) \left( \frac xn \right)^{1-\bar s} +
\sum_{n>x} a_K (n,s) \left( \frac xn \right)^s \right) \\
&   +   x^{1/2 - \bar s}  \log \frac\tau{2\pi} + O (  x^{1/2 - \sigma} )
+ O_{\varepsilon, K} \left(
    x^{1/2} \tau^{-1} \max (x^\varepsilon, \log^{2K+2} x)
\right) \,,
\end{aligned}
\end{equation}
where $\tau=|t|+2$.
\end{proposition}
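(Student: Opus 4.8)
The plan is to realize the left‑hand side as a sum of residues of $\xi''/\xi'$ against a well‑chosen kernel, and then to evaluate that kernel integral using Lemma~\ref{lem:logderiv} and the functional equation. Write $s=\sigma+\i t$. Since $\xi'$ is entire and, under RH, has all its zeros on the critical line, $\tfrac{\xi''}{\xi'}(w)$ is meromorphic with a simple pole of residue equal to the multiplicity at each zero $\r=\tfrac12+\i\g$ of $\xi'$ and no other poles. With the kernel
\[
K(w)=x^{w-1/2}\Bigl(\frac{1}{s-w}+\frac{1}{w-(1-\bar s)}\Bigr)
      =(2\sigma-1)\,\frac{x^{w-1/2}}{(s-w)(w-(1-\bar s))},
\]
one checks, using $\r-(1-\bar s)=\overline{s-\r}$ and $s-(1-\bar s)=2\sigma-1$, that $\operatorname{Res}_{w=\r}\bigl[\tfrac{\xi''}{\xi'}(w)K(w)\bigr]=(2\sigma-1)\,x^{\i\g}\big/\big((\sigma-\tfrac12)^2+(t-\g)^2\big)$, so the left side of the Proposition equals $\sum_{\r}\operatorname{Res}_{w=\r}\bigl[\tfrac{\xi''}{\xi'}(w)K(w)\bigr]$.

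First I would integrate $\tfrac{\xi''}{\xi'}(w)K(w)$ up the line $\Re w=1+\ve$ and push the contour to the far left. Because $\sigma>5/4>1+\ve$, the pole $w=s$ of $K$ stays to the right; $\tfrac{\xi''}{\xi'}$ is regular at $w=1$; and $K(w)\ll x^{O(1)}|w|^{-2}$ while $\tfrac{\xi''}{\xi'}(w)\ll\log^{2}|w|$ off the zeros, so (truncating at height comparable to $\tau$ and discarding the negligible horizontal segments) the only poles crossed are those at the $\r$ and the pole of $K$ at $w=1-\bar s$:
\[
(2\sigma-1)\sum_{\g}\frac{x^{\i\g}}{(\sigma-\tfrac12)^2+(t-\g)^2}
 =\frac{1}{2\pi\i}\int_{(1+\ve)}\frac{\xi''}{\xi'}(w)K(w)\,\d w
   -\frac{\xi''}{\xi'}(1-\bar s)\,x^{1/2-\bar s}.
\]
On the line $\Re w=1+\ve$ I would insert Lemma~\ref{lem:logderiv}, writing $\tfrac{\xi''}{\xi'}(w)=L(w)+\sum_{n\ge1}a_K(n,w)n^{-w}+O(\ve^{-2}2^{-K})$ for $|\Im w|\ge T_\ve$ (the part $|\Im w|<T_\ve$, and the integral of the $O(\ve^{-2}2^{-K})$ term against $K$, being bounded directly). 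The $L(w)$‑term, after its contour is pushed left past the pole $w=1-\bar s$ of $K$, contributes $L(1-\bar s)x^{1/2-\bar s}$ together with negligible residues at $w=1,-2,-4,\dots$. For the Dirichlet part one uses that $\sum_n a_K(n,w)n^{-w}$ converges for $\Re w>1$ while $1+\ve<\sigma$: evaluating the $\tfrac1{s-w}$‑piece of $K$ term by term by passing to the right across $w=s$ gives $x^{-1/2}\sum_{n>x}a_K(n,s)(x/n)^s$ from the terms with $n>x$, while passing the $\tfrac1{w-(1-\bar s)}$‑piece to the left across $w=1-\bar s$ gives $x^{-1/2}\sum_{n\le x}a_K(n,1-\bar s)(x/n)^{1-\bar s}$ from the terms with $n\le x$. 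Finally, by the functional equation and Lemma~\ref{lem:logderiv} again, $-\tfrac{\xi''}{\xi'}(1-\bar s)x^{1/2-\bar s}=\tfrac{\xi''}{\xi'}(\bar s)x^{1/2-\bar s}=L(\bar s)x^{1/2-\bar s}+O(x^{1/2-\sigma})$; combining this with $L(1-\bar s)x^{1/2-\bar s}$ and the identity $L(1-\bar s)+L(\bar s)=\tfrac12\log\tfrac{1-\bar s}{2\pi}+\tfrac12\log\tfrac{\bar s}{2\pi}+O(\tfrac1\tau)=\log\tfrac{\tau}{2\pi}+O(\tfrac1\tau)$ (from the expansion of $L$ in \eqref{L defn}) produces the main term $x^{1/2-\bar s}\log\tfrac{\tau}{2\pi}$. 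Estimating the remaining pieces — the leftover vertical integrals, the $n=x$ boundary terms, the integrated $O(\ve^{-2}2^{-K})$, the residues at $w=1,-2k$, and the truncation tails — by the trivial bounds \eqref{Lambda_j bound}--\eqref{a_K (n,s) bd} gives the error $O(x^{1/2-\sigma})+O_{\ve,K}\bigl(x^{1/2}\tau^{-1}\max(x^{\ve},\log^{2K+2}x)\bigr)$.

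The hard part, and exactly the point the authors stress, is that $a_K(n,w)=\sum_{k=0}^{K}\a_k(n)L(w)^{-k}$ depends on $w$ and is meromorphic with poles at the zeros of $L$, so none of the contour moves above is free. One must verify that $L$ has no zeros in $\Re w\ge1$ (to make the rightward passage past $w=s$ clean), locate the zeros of $L$ in $\Re w<0$ precisely enough that the leftward passage past $w=1-\bar s$ remains in a zero‑free region, and control the multipliers $L(w)^{-k}$ uniformly along every contour. Here it helps to reorganize $\sum_n a_K(n,w)n^{-w}=\sum_{k=0}^{K}L(w)^{-k}A_k(w)$, where $A_0=\zeta'/\zeta$ and, for $k\ge1$, $A_k(w)=\bigl(-\zeta'/\zeta(w)\bigr)^{k-1}(\zeta'/\zeta)'(w)$ are honest Dirichlet series, so that one is moving genuine analytic objects times slowly varying factors, and to keep the contours near the critical line where the bounds are strongest. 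Justifying the interchanges of summation and integration, and massaging all the error pieces into the precise form $\max(x^{\ve},\log^{2K+2}x)$, is then the bulk of the work; it is carried out in Section~\ref{sec:explicitformula}.
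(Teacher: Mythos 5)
Your outline follows the paper's proof essentially step for step: the same kernel (the paper writes it as $k(w,s)x^w$ in a variable shifted by $1/2$), the same residue identity picking up the zeros of $\xi'$ together with the kernel pole at $1-\bar s$, insertion of Lemma~\ref{lem:logderiv} on the line $\Re\, w=1+\varepsilon$ with the range $|\Im\, w|<T_\varepsilon$ and the $O(\varepsilon^{-2}2^{-K})$ term estimated separately, the $n\le x$ terms pushed left past $w=1-\bar s$ and the $n>x$ terms pushed right past $w=s$, and the functional equation turning $-\xi''/\xi'(1-\bar s)$ into $L(\bar s)+O(1)$ so that, combined with $L(1-\bar s)$, it produces the main term $x^{1/2-\bar s}\log(\tau/2\pi)$. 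Whether one splits the kernel into its two partial fractions (as you do) or keeps it whole and splits only the sum over $n$ (as the paper does) is immaterial, though the whole kernel's $|w|^{-2}$ decay makes the absolute-convergence and $n\approx x$ issues a bit cleaner.

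The one concrete slip is your zero-free claim for $L$: you say one must ``verify that $L$ has no zeros in $\Re\, w\ge 1$'' so that the rightward passage past $w=s$ is clean, and you locate the zeros requiring attention in $\Re\, w<0$. In fact $L$ has two real zeros to the right of that line, at $m_2\approx 2.8$ and $m_1\approx 7.6$ (the remaining zeros $m_3\approx -2.6,\dots$ lie to the left), so the rightward shift of the $n>x$ terms does cross poles of the factors $L(w)^{-k}$ inside $a_K(n,w)$ and picks up residues there; the verification you call for is impossible as stated. This does not endanger the Proposition: the paper sums exactly these residues over $n>x$ and bounds them by $\ll x^{1/2}(\log x)^{2K+2}|s|^{-2}$, in parallel with the residues at $m_3-1/2, m_4-1/2,\dots$ coming from the $n\le x$ terms, and both are absorbed into the error $O_{\varepsilon,K}\bigl(x^{1/2}\tau^{-1}\max(x^{\varepsilon},\log^{2K+2}x)\bigr)$. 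So your plan goes through once the (false) zero-free verification is replaced by this residue estimate, which your own framework for handling the zeros of $L$ already accommodates.
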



\section{Beginning of the Proof}\label{sec:startofproof}

Set $\sigma = 3/2$ in Proposition~\ref{prop:explicitformula}, write the resulting 
equation as $L(x, t) = R_1(x, t) +R_2(x, t) + R_3(x, t)+ R_4(x, t)$, and calculate
\begin{equation}\label{L=R}
\int_0^T |L(x, t)|^2 \d t = \int_0^T | R_1(x, t) +R_2(x, t) + R_3(x, t)+ R_4(x, t) |^2 \d t\,.
\end{equation}
 
The left-hand side may be treated in exactly the same way 
as the corresponding expression in Montgomery~\cite{M}, 
to whom we refer the reader (cf. pp. 187--188). 
We find
\begin{equation}\label{L}
\int_0^T |L(x, t)|^2 \d t 
= 2\pi \sum_{0 < \g, \g' \leq T} x^{\i (\g -
\g')} w (\g - \g') + O (\log^3 T)\;,
\end{equation}
where $w (u) = 4/(4 + u^2)$.

Next we begin the calculation of the right-hand side of \eqref{L=R}. In various ranges of 
$x$, one or another of the integrals    $\int_0^T |R_i(x, t)|^2 \d t $ dominates the others,
and we record the following useful formula for later. Given $x$,
let $ {\bf \mathcal{R}_1}(x)$ be the largest of $\int_0^T |R_i(x, t)|^2 \d t  \; (i=1, 2, 3, 4)$,
 $ {\bf \mathcal{R}_2}(x)$   the next largest, and  so on.   Then we have
\begin{equation} \label{int of R inequality}
\int_0^T |R_1(x, t) +R_2(x, t) + R_3(x, t)+ R_4(x, t) |^2 \d t 
=  {\bf \mathcal{R}_1}(x) + O\left( \left( {\bf \mathcal{R}_1}(x) {\bf \mathcal{R}_2} (x) \right)^{1/2} \right)
\end{equation}

We find
\begin{equation}\label{R_2}
\int_0^T |R_2 (x, t)|^2 \d t 
= \int^T_0 \left| x^{-1+ \i t} \log \frac{\tau}{2\pi}  \right|^2 \d t =
\frac{T}{ x^2} \left( \log^2 T + O (\log T) \right) \,,
\end{equation}
\begin{equation}\label{R_3}
\int_0^T |R_3(x, t)|^2 \d t \ll   \frac{T}{x^2} \,
\end{equation}
and
\begin{equation}\label{R_4}
\begin{aligned}
\int_0^T |R_4(x, t)|^2 \d t 
& \ll_{\varepsilon, K}  x \max (x^{2\varepsilon}, (\log x)^{4K+4})
\int_0^T  \frac{\d t}{\tau^2} \\
& \ll_{\varepsilon, K} x \max ( x^{2\varepsilon}, (\log x)^{4K + 4})\;.
\end{aligned}
\end{equation}

To estimate $\int_0^T |R_1(x, t)|^2 \d t $, we need the following 
\begin{lemma}\label{Integral lemma} 
Let $\sigma = -1/2$ or $3/2$. Then for $ x >0 $ we have
\begin{align}\label{integral 1}
\int^T_0
\frac{x^{\i t}}{\overline{L(s)}\,^k L(s)^\ell} \; \d t 
=
\begin{cases} 
 T \left(\frac12 \log \left(\frac{T}{2\pi} \right)\right)^{-(k+l)}   
 \left( 1 + O_K\left( \frac{1}{\log T} \right) \right)
&\text{if}\; x=1 \,, \\
O_K\left(\frac{1}{|\log x |} \right)
&\text{if}\; x \neq 1 \,,
\end{cases}
\end{align}
where $k, \ell = 0,1,2,\dotsc,  K$. The same result holds for
\begin{equation}\label{integral 2}
\int^T_0
\frac{x^{\i t}}{\overline{L(1-\bar s)}\,^k L(s)^\ell} \; \d t \;.
\end{equation}
\end{lemma}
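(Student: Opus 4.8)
The plan is to use that the integrand varies slowly in $t$, so that the case $x=1$ reduces to an elementary real integral while the case $x\neq1$ follows from a single integration by parts. Write $s=\sigma+\i t$ with $\sigma=-1/2$ or $3/2$, put $\tau=|t|+2$, and set $\Phi(t)=\bigl(\overline{L(s)}^{\,k}L(s)^{\ell}\bigr)^{-1}$, so the integral in \eqref{integral 1} is $\int_0^T x^{\i t}\Phi(t)\,\d t$. First I would fix a large $t_0$ so that for $t\ge t_0$ the point $s$ lies in the region $|\arg s|<\pi-\delta$, $|s|\ge1/4$ of \eqref{L defn}--\eqref{L ineq 2} (automatic for $\sigma=3/2$; for $\sigma=-1/2$ it suffices to take $t_0>\tfrac12\tan\delta$) and $\tfrac12\log\tfrac{\tau}{2\pi}\ge1$ there. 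Writing $\log\tfrac{s}{2\pi}=\log\tfrac{\tau}{2\pi}+\tfrac{\i\pi}{2}+O(1/\tau)$ in \eqref{L defn} gives, for $t\ge t_0$, $L(s)=\tfrac12\log\tfrac{\tau}{2\pi}+\tfrac{\i\pi}{4}+O(1/\tau)$ and, conjugating, $\overline{L(s)}=\tfrac12\log\tfrac{\tau}{2\pi}-\tfrac{\i\pi}{4}+O(1/\tau)$, whence $\Phi(t)=\bigl(\tfrac12\log\tfrac{\tau}{2\pi}\bigr)^{-(k+\ell)}\bigl(1+O_K(1/\log\tau)\bigr)$. Moreover $\tfrac{\d}{\d t}L(s)=\i L'(s)\ll1/\tau$ and $\tfrac{\d}{\d t}\overline{L(s)}\ll1/\tau$ by \eqref{L ineq 2}, so the logarithmic derivative of $\Phi$ is $\ll_K 1/(\tau\log\tau)$, giving $\Phi'(t)\ll_K 1/\bigl(\tau(\log\tau)^{k+\ell+1}\bigr)$ for $t\ge t_0$. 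On the compact segment $0\le t\le t_0$ I would use that $L$ is analytic and nonvanishing on the lines $\sigma=-1/2$ and $\sigma=3/2$ (a finite check for bounded $|t|$; for large $|t|$ one has $|L(s)|\gg\log\tau$ by \eqref{L defn}) to see that $\Phi,\Phi'$ are bounded there. The net outcome: $|\Phi|\ll_K1$ on $[0,\infty)$, and $\int_0^{\infty}|\Phi'(t)|\,\d t\ll_K1$, since the tail converges when $k+\ell\ge1$ (then $k+\ell+1\ge2$) and $\Phi'\equiv0$ when $k=\ell=0$.

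For $x=1$ I would split $\int_0^T\Phi=\int_0^{t_0}\Phi+\int_{t_0}^T\Phi$, the first piece being $O_K(1)$. Into the second I insert the asymptotic for $\Phi$ and use the elementary identity (substitute $u=\log\tfrac{t}{2\pi}$, then iterate $\int e^u u^{-m}\,\d u=e^u u^{-m}+m\int e^u u^{-m-1}\,\d u$) that $\int_{t_0}^T\bigl(\tfrac12\log\tfrac{t}{2\pi}\bigr)^{-m}\d t=T\bigl(\tfrac12\log\tfrac{T}{2\pi}\bigr)^{-m}\bigl(1+O_m(1/\log T)\bigr)$, the shift $t\mapsto t+2$ being harmless. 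This produces the main term $T\bigl(\tfrac12\log\tfrac{T}{2\pi}\bigr)^{-(k+\ell)}$ with errors $\ll_K T(\log T)^{-(k+\ell+1)}$ and $O_K(1)$, each a factor $1/\log T$ below the main term --- exactly the claimed formula. For $x\neq1$ I would write $x^{\i t}=\tfrac1{\i\log x}\tfrac{\d}{\d t}x^{\i t}$ and integrate by parts once: the boundary term is $\ll_K\tfrac1{|\log x|}\bigl(|\Phi(T)|+|\Phi(0)|\bigr)\ll_K1/|\log x|$, and the remaining integral is $\le\tfrac1{|\log x|}\int_0^T|\Phi'(t)|\,\d t\ll_K1/|\log x|$ by the bound above; hence $\int_0^T x^{\i t}\Phi(t)\,\d t\ll_K1/|\log x|$, as required.

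For the variant \eqref{integral 2}, replacing $\overline{L(s)}$ by $\overline{L(1-\bar s)}$ changes nothing of substance: $1-\bar s=(1-\sigma)+\i t$ with $1-\sigma\in\{3/2,-1/2\}$ again, so $\overline{L(1-\bar s)}$ obeys the same asymptotic $\tfrac12\log\tfrac{\tau}{2\pi}+O(1)$ for $t\ge t_0$ (conjugate \eqref{L defn}), the same bound $\ll1/\tau$ on its $t$-derivative from \eqref{L ineq 2}, and is nonvanishing with modulus $\gg1$; the two arguments above then apply verbatim.

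The only friction I anticipate is bookkeeping at the ends of the range: near $t=0$ the asymptotic \eqref{L defn} is unavailable for $\sigma=-1/2$ (there $|\arg s|$ is near $\pi$), so one falls back on compactness together with the non-vanishing of $L$ on the two vertical segments; and securing the sharp error $O_K(1/\log T)$ when $x=1$ requires the iterated integration by parts above rather than a crude dyadic split. Neither point is serious.
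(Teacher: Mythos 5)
Your proposal is correct and follows essentially the same route as the paper: establish $\bigl(\overline{L(s)}^{\,k}L(s)^{\ell}\bigr)^{-1}=\bigl(\tfrac12\log\tfrac{\tau}{2\pi}\bigr)^{-(k+\ell)}\bigl(1+O_K(1/\log\tau)\bigr)$ from \eqref{L defn}, read off the $x=1$ case by direct integration, and handle $x\neq1$ by a single integration by parts whose remaining integral is controlled via \eqref{L ineq 2}. Your treatment is slightly more explicit than the paper's at the two places it glosses over (the elementary evaluation of $\int_0^T(\log t)^{-m}\,\d t$ behind the ``follows immediately,'' and the behavior near $t=0$ on the line $\sigma=-1/2$), but the substance is identical.
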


\begin{proof} We prove \eqref{integral 1} only, as the proof of \eqref{integral 2} 
is almost identical.

First observe that from \eqref{L defn} we have 
\begin{equation}\label{log formula}
\frac{1}{\overline{L(s)}\,^k L(s)^\ell} = 
\frac{1}{\left(\frac12 \log \tau/2\pi \right)^{k+l} }
\left(1 + O_K\left( \frac{1}{\log \tau} \right)   \right) \;.
\end{equation}
The case $x=1$ follows immediately. 
 
Now suppose that  $x>0$, but $x\neq 1$.
Integrating by parts, we find that our integral equals
$$  
\frac{x^{\i t}}{\i \log x \,\overline{L(s)}\,^k
L(s)^\ell} \bigg|^T_0 + \frac 1{\i \log x} \int^T_0
\frac{x^{\i t}}{ \overline{L(s)}\,^k L(s)^\ell } 
\left( k \frac{ \overline{L'}(s)}{\overline{ L }(s)} + \ell \frac{L'(s)}{L(s)} \right) \d t\;.
$$
By \eqref{L defn}, \eqref{L ineq 2}, and \eqref{log formula}, this is
\begin{align*}
&\ll  \frac 1{|\log x|} \left( 1 + (k + \ell)  \int_0^T \;
\frac{1}{ \tau(1/2 \log \tau/2\pi)^{k + \ell +1}}\d t   \right) \\
&\ll_K   \frac 1{| \log x|} \,.
\end{align*}
This completes the proof.

\end{proof}

We now come to the term
\begin{align*}
\int_0^T |R_1(x, t)|^2 \d t  & = \frac 1x \int^T_0 \left| \sum_{n\leq x} a_K (n, -1/2 + \i t) \left(
\frac xn \right) ^{-1/2 + \i t} + \sum_{n>x} a_K (n, 3/2 + \i t) \left(
\frac xn \right)^{3/2 + \i t} \right|^2 \d t \\
& = \frac 1{x^2} \int^T_0 \left| \sum_{n\leq x} a_K (n, - 1/2 + \i t)
n^{1/2 - \i t} \right|^2  \d t 
+ x^2 \int^T_0 \left| \sum_{n>x} a_K (n, 3/2 + \i t)n^{-3/2 - \i t}
\right|^2 \d t \\
&\quad  + 2 Re\, \int^T_0 \overline{\left( \sum_{n\leq x} a_K (n, - 1/2 +
\i t) n^{1/2 - \i t}\right)} \left( \sum_{m>x} a_K ( m,  3/2 + \i t) m^{-
3/2 + \i t}\right) \d t\\
& = \frac 1{x^2} R_{1,1}  + x^2 R_{1,2}  + 2 \,\Re\, R_{1,3}  \;,  
\end{align*}
say. Recalling that
$  
a_K (n,s) = \sum^K_{k=0} \a_k (n)/L(s)^k,
$ 
we see by Lemma~\ref{Integral lemma}  that 
\begin{align*}
 R_{1,1} & = \sum_{m,n\leq x} \sqrt{mn} \sum^K_{k,\ell =0} \overline{\a_k (n)}
\a_\ell (m) \int^T_0 \frac{(n/m)^{it}}{\overline{L(-1/2 + it)^k} L (-1/2
+ it)^\ell} \; dt\\
& = (1 + o (1)) T \sum_{n\leq x} n \left( \sum^K_{k,\ell=0}
\frac{\overline{\a_k(n)} \a_\ell (n)}{(1/2 \log T /2\pi)^{k+\ell}}
\right)\\
& \qquad + O_K \left( \sum_{1\leq m < n \leq x} \sqrt{mn} \left(
\sum^K_{k,\ell =0}  \frac{|\a_k (n) \a_\ell (m)|}{\log n/m }
  \right) \right)\\
& = (1+ o(1)) T \sum_{n\leq x} n \left| \sum^K_{k=0} \frac{\a_k (n)}{(1/2
\log T/2\pi)^k} \right|^2 \\
& \qquad + O_K \left( \sum_{1 \leq m < n \leq x} \frac{\sqrt{mn}}{\log n/m}
  \left( \sum^K_{k=0} |\a_k (n)|\right)
\left( \sum^K_{k=0} |\a_k(m)|\right) \right)  \;.
\end{align*}
From the standard inequality
\begin{equation}
 \sum_{1 \leq m < n \leq x} \frac{|b(m) b(n)|}{(mn)^\sigma 
\log n/m} \ll x \log x \sum_{n\leq x} \, \frac{|b(n)|^2}{n^{2\sigma}} 
\end{equation}
and \eqref{Alpha_k bound} we see that the $O$-term is  
\begin{align*}
& \ll_K x \log x \sum_{n\leq x} n \left( \sum^K_{k=0} |\a_k (n)|\right)^2
\\
& \ll_K x \log x \sum_{n\leq x} n (\log n)^{2K+2} \\
& \ll_K x^3 (\log x)^{2K+3}\;.
\end{align*}
Thus, we have
$$ 
 R_{1,1} = (1 + o(1)) T \sum_{n\leq x} n \left| \sum^K_{k=0} \frac{\a_k
(n)}{(1/2 \log T/2\pi)^k} \right|^2 + O_K \left(x^3 (\log x)^{2K+3}\right )\;.
$$

Similarly, we find that
$$
 R_{1,2}  = (1 + o (1))T \sum_{n>x} n^{-3} \left| \sum^K_{k=0} \frac{\a_k
(n)}{(1/2 \log T/2\pi)^k} \right|^2 + O_K  \left(x^{-1}(\log
x)^{2K+3} \right)\;.
$$
  
For $ R_{1, 3} $ we have  
\allowdisplaybreaks{\begin{align*}
 R_{1, 3}  & = \sum_{n\leq x} n^{1/2} \sum_{m>x} m^{-3/2} \sum^K_{k,\ell =0}
\overline{\a_k (n)} \a_\ell (m) \int^T_0 \frac{ (mn)^{it}}{\overline{L (-
1/2 + it)}\, ^k L (3/2 + it)^\ell} \, dt\\
& \ll \sum_{n\leq x} n^{1/2} \sum_{m>x} m^{-3/2} (\log mn)^{-1} \left(
\sum^K_{k,\ell =0} |\a_k (n) \a_\ell (m) |   \right) \\
& \ll_K \sum_{n\leq x} \sum_{m>x} \frac{n^{1/2} m^{-3/2}}{\log mn} (\log
m)^{K+1} (\log n)^{K+1} \\
& \ll_K \frac 1{\log x} \left( \sum_{n \leq x} n^{1/2} (\log n)^{K+1}
\right) \left( \sum_{m>x} m^{-3/2} (\log m)^{K+1} \right) \\
& \ll_K x (\log x)^{2K + 1} \;.
\end{align*} }

Combining these estimates, we have
\begin{equation} 
\begin{aligned}\label{R1}
\int_0^T |R_1(x, t)|^2 \d t    = &(1 + o(1)) T x^{-2} \sum_{n\leq x} n \left| \sum^K_{k=0} \;
\frac{\a_k (n)}{(1/2 \log T/2\pi)^k} \right|^2 \\
 &\; + (1 + o(1)) Tx^2 \sum_{n>x} \frac 1{n^3} \left| \sum^K_{k=0}
\frac{\a_k(n)}{(1/2 \log T/2\pi)^k} \right|^2 \\
 & \quad + O_K (x (\log x)^{2K+3})\;.
\end{aligned}
\end{equation}

In the next section we complete the proof of Theorem~\ref{thm:F1},
subject to an arithmetic proposition which we prove in Section~\ref{sec:arithmetic}.
 

\section{Proof of Theorem~\ref{thm:F1}}\label{sec:endofproof}

It remains to evaluate~\eqref{R1} and to put the expressions in the
form of Theorem~\ref{thm:F1}.
The main terms from~\eqref{R1} can be obtained from a Stieltjes integral
involving
\begin{equation}\label{eqn:Ax}
A(x) = A(x,K,T) = \sum_{n\leq x} \left| \sum^K_{k=0}
\frac{\a_k (n)}{\l^k} \right|^2 .
\end{equation}
Here $\alpha_k$ is defined in \eqref{eqn:alphak}
and $\l = \frac 12 \log \frac{T}{2\pi}$.
We write
\begin{equation}\label{A(x) 3}
A(x) = \sum^K_{k=0} A_{k,k} (x) \l^{-2k} +
2 \sum^K_{k=1} \sum_{0 \leq \ell <k} A_{k,\ell} (x) \l^{- (k+\ell)},
\end{equation}
where
\begin{equation}\label{A(x) 2}
A_{k,\ell} (x)=
\sum_{n\leq x} \a_k (n) \a_\ell (n).
\end{equation}
The following proposition, which is proven in Section~\ref{sec:arithmetic},
is sufficient to evaluate the leading order
asymptotics of~\eqref{R1}.

\begin{proposition}\label{prop:Akell} We have \begin{equation}
A_{k,0} (x) =
\begin{cases}
x \log x + O (x)  \quad& \text{ if }\ \  k=0 \,,\\
- x \log^2 x + O(x \log x) \quad& \text{ if }\ \  k=1 \,,\\ 
O\bigl(x^{\tfrac12+\varepsilon}\bigr)
\quad
&\text{ if }\ \  k \geq 2 \;.
\end{cases}
\end{equation}
and
\begin{equation}
A_{k,1} (x) =
\begin{cases}
 x \log^3 x + O (x \log^2 x) \quad &
\text{ if }\ \ k=1\;,\\
O\bigl(x^{\tfrac12+\varepsilon}\bigr)
&
\text{ if }\ \ k \geq 2
\;. \end{cases}
\end{equation}
If $k\ge 1$, then
\begin{equation}
A_{k,k}(x) = 2 \frac{(k-1)!}{(2k)!} x\log^{2k+1} x + O(x \log^{2k} x) ,
\end{equation}
and if
$k>\ell\ge 2$, 
then
\begin{equation}\label{eqn:Aklbnd}
A_{k,\ell}(x)\ll x \log^{k+\ell} x .
\end{equation}
As a consequence, we have
\begin{equation}\label{The Sum Fnc}
{A}(x) = x \log x \left( 1 -2 \left( \frac{\log x}\l \right) + 2
\sum^K_{k=1} \frac{(k-1)!}{(2k)!} \left( \frac{\log x}\l \right)^{2k}
\right) + O_K (x)\end{equation}
for $x = T^\alpha$ with $0 <\alpha \le C_0$, where $C_0$ is any fixed positive number.
\end{proposition}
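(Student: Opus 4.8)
\emph{Proof proposal.} The plan is to reduce every $A_{k,\ell}$ to a classical sum over prime powers. The starting point is the identity
\[
\a_k(n)=\tfrac1k\,\Lambda_k(n)\log n\qquad(k\ge1),
\]
which follows from \eqref{eqn:alphak} and \eqref{eqn:vonM} by comparing absolutely convergent Dirichlet series for $\Re s>1$: with $\mathcal Z(s)=-\zeta'/\zeta(s)=\sum_n\Lambda(n)n^{-s}$ one has $\sum_n\Lambda(n)\log n\,n^{-s}=-\mathcal Z'(s)$, so that $\sum_n\a_k(n)n^{-s}=\mathcal Z(s)^{k-1}(-\mathcal Z'(s))=-\tfrac1k\tfrac{d}{ds}\mathcal Z(s)^k=\tfrac1k\sum_n\Lambda_k(n)\log n\,n^{-s}$; this also makes \eqref{Alpha_k bound} transparent. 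Consequently, from \eqref{A(x) 2}, $A_{0,0}(x)=\sum_{n\le x}\Lambda(n)^2$, $A_{k,0}(x)=-k^{-1}\sum_{n\le x}\Lambda_k(n)\Lambda(n)\log n$, and $A_{k,\ell}(x)=(k\ell)^{-1}\sum_{n\le x}\Lambda_k(n)\Lambda_\ell(n)(\log n)^2$ for $k,\ell\ge1$.

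Next I would dispose of the cases involving a small index. For $A_{0,0}$, $A_{1,0}$, $A_{1,1}$ the sums are supported on prime powers $n=p^m$; the contribution of $m\ge2$ is $O(x^{1/2}\log^{O(1)}x)$, and the $m=1$ part is $\sum_{p\le x}(\log p)^j$ with $j=2,3,4$ respectively, which by the prime number theorem (in the form $\vartheta(x)=x+O(x/\log x)$, $\vartheta(x)=\sum_{p\le x}\log p$, and partial summation) equals $x(\log x)^{j-1}+O\bigl(x(\log x)^{j-2}\bigr)$; this yields the stated formulas for $A_{0,0}$, $A_{1,0}$, $A_{1,1}$. For $k\ge2$ and $\ell\in\{0,1\}$, the factor $\Lambda(n)$ again forces $n=p^m$, while $\Lambda_k(p^m)\ne0$ forces $m\ge k\ge2$; there are $O(x^{1/2})$ such $n\le x$, and the trivial bound $\Lambda_k(n)\le(\log n)^k$ gives $A_{k,0}(x),A_{k,1}(x)\ll x^{1/2+\ve}$. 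For the off‑diagonal bound \eqref{eqn:Aklbnd} with $k>\ell\ge2$, the key observation is that $\Lambda_k(n)\Lambda_\ell(n)\ne0$ forces $\omega(n)\le\ell<k\le\Omega(n)$, so $n$ must be non‑squarefree; writing such $n=p^a m$ with $p\nmid m$, $a\ge2$, and analysing the (few) admissible configurations — the dominant one being $n=p^2q_1\cdots q_{\ell-1}$, for which $\Lambda_\ell(n)=\ell!\,(\log p)\prod\log q_i$ and $\Lambda_{\ell+1}(n)=\tfrac{(\ell+1)!}{2}(\log p)^2\prod\log q_i$ — together with $\sum_p(\log p)^{O(1)}p^{-2}<\infty$ and the estimate of $\sum_{q_1\cdots q_r\le y}\prod(\log q_i)^2$ from the next paragraph, one finds $A_{k,\ell}(x)\ll x(\log x)^{k+\ell}$ with room to spare.

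The heart of the matter is the diagonal term $A_{k,k}(x)=\tfrac1{k^2}\sum_{n\le x}\Lambda_k(n)^2(\log n)^2$ for $k\ge1$. Here the main contribution comes from squarefree $n$ with exactly $k$ prime factors, for which $\Lambda_k(n)=k!\prod_{p\mid n}\log p$; the non‑squarefree $n$ (necessarily with $\omega(n)\le k$) contribute a lower‑order amount, absorbed into the error term, by the same configuration analysis as above. One is thus left to evaluate $k!\sum_{p_1\cdots p_k\le x}\bigl(\prod_i\log p_i\bigr)^2\bigl(\sum_i\log p_i\bigr)^2$ over ordered distinct primes (the distinctness costing only a lower‑order amount). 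Setting $T_k(x)=\sum_{p_1\cdots p_k\le x}\prod_i(\log p_i)^2$ and using the recursion $T_k(x)=\sum_p(\log p)^2T_{k-1}(x/p)$ together with the prime number theorem, induction on $k$ and the substitution $t=x^u$ give
\[
T_k(x)\sim c_{k-1}\,x(\log x)^{2k-1}\!\int_0^1\! u(1-u)^{2k-3}\,\d u,\qquad c_k=\frac{c_{k-1}}{(2k-1)(2k-2)},\quad c_1=1,
\]
whence $c_k=1/(2k-1)!$ and $\sum_{n\le x}\Lambda_k(n)^2=\tfrac{k!}{(2k-1)!}x(\log x)^{2k-1}+O\bigl(x(\log x)^{2k-2}\bigr)$. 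A further partial summation to reinstate the weight $(\log n)^2$ (whose leading term is $(\log x)^2$) gives $\sum_{n\le x}\Lambda_k(n)^2(\log n)^2=\tfrac{k!}{(2k-1)!}x(\log x)^{2k+1}+O\bigl(x(\log x)^{2k}\bigr)$, hence $A_{k,k}(x)=\tfrac{(k-1)!}{k(2k-1)!}x\log^{2k+1}x+O(x\log^{2k}x)=\tfrac{2(k-1)!}{(2k)!}x\log^{2k+1}x+O(x\log^{2k}x)$, as claimed. I expect this step — carrying honest error terms through the induction and the repeated partial summations, and bounding the distinctness correction and the non‑squarefree $n$ — to be the main technical obstacle.

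Finally, \eqref{The Sum Fnc} follows by inserting all of the above into \eqref{A(x) 3}. Since $x=T^{\a}$ with $0<\a\le C_0$ we have $\log x\asymp\l\asymp\log T$, so $\log x/\l$ is bounded; therefore each error $O(x\log^{j}x)\,\l^{-j}$ is $O_K(x)$, the terms $A_{k,0},A_{k,1}$ with $k\ge2$ contribute $O_K(x^{1/2+\ve})$, and each $A_{k,\ell}(x)\l^{-(k+\ell)}$ with $k>\ell\ge2$ is $O_K(x)$. The surviving main terms are $A_{0,0}(x)$, $2A_{1,0}(x)\l^{-1}$, and $A_{k,k}(x)\l^{-2k}$ for $1\le k\le K$, giving
\[
A(x)=x\log x-2x\log x\,\frac{\log x}{\l}+2\sum_{k=1}^K\frac{(k-1)!}{(2k)!}\,x\log x\Bigl(\frac{\log x}{\l}\Bigr)^{2k}+O_K(x),
\]
which is precisely \eqref{The Sum Fnc}.
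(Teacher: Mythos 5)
Your proposal is correct, and while the analytic engine at its core (a recursion over primes evaluated with the prime number theorem and the Beta-integral, i.e.\ Lemma~\ref{lem:logplogxp} and Proposition~\ref{prop:Skl} of the paper) is the same, your reduction step is genuinely different and cleaner. The paper works directly with the convolution $\a_k=\Lambda_{k-1}*\Lambda\log$, unfolds it over prime powers, and shows (Lemma~\ref{lem:Akell}) that $A_{k,\ell}$ equals a double sum over primes $p,q$ of $\S_{k-2,\ell-2}(x/pq)$ plus a single sum of $\S_{k-1,\ell-1}(x/p)$, where $\S_{k,\ell}(x)=\sum_{n\le x}\Lambda_k(n)\Lambda_\ell(n)$; it then evaluates $\S_{k,\ell}$ by induction. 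Your identity $\a_k(n)=\tfrac1k\Lambda_k(n)\log n$ is correct (one can also verify it combinatorially by symmetrizing $\log n=\sum_i\log d_i$ over the decompositions $n=d_1\cdots d_k$), and it collapses the reduction to $A_{k,\ell}(x)=(k\ell)^{-1}\sum_{n\le x}\Lambda_k(n)\Lambda_\ell(n)(\log n)^2$ for $k,\ell\ge1$, so that only the single quantity $\S_{k,\ell}$ needs to be understood, the extra weight $(\log n)^2$ being reinstated by one partial summation; the constants agree, since $\tfrac{k!}{(2k-1)!}\cdot\tfrac1{k^2}=\tfrac{2(k-1)!}{(2k)!}$. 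What your route buys is a shorter and more transparent reduction (the paper's Lemma~\ref{lem:Akell} together with the error analysis around \eqref{usetrivialbound} essentially disappears); what the paper's route buys is that the off-diagonal bound falls out of the same induction as the diagonal asymptotic, whereas you replace it by a separate non-squarefree configuration count. That count is the one place where I would press you for detail: for $k>\ell\ge 2$ the trivial bound gives only $x(\log x)^{k+\ell+2}$, so you must genuinely exploit the forced square factor $p^a$ with $a\ge2$ and the convergence of $\sum_{p,\,a\ge2}(\log p)^{O(1)}p^{-a}$ to recover the two logarithms; your sketch does this for the configuration with $\Omega(n)=\ell+1$, and the general case needs the (routine) remark that additional repeated prime factors only make the $p$-sums more convergent. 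The final assembly of \eqref{The Sum Fnc} from \eqref{A(x) 3} is exactly as in the paper.
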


\begin{proof}[Proof of Theorem~\ref{thm:F1}]

We first evaluate $\int_0^T |R_1(x, t)|^2 \d t $ 
given by \eqref{R1}.  We have
\begin{align}\label{M formula}
\int_0^T |R_1(x, t)|^2 \d t   & =
(1 + o(1)) T \left\{ x^{-2} \int^x_{1^{-}} u \, d {A} (u)
+ x^2 \int^\infty_x u^{-3} d {A} (u) \right\} 
+ O_K \left(x (\log x)^{2K+3} \right)\\
& = (1 + o (1) T \left\{ - x^{-2} \int^x_1 {A} (u) du + 3x^2
\int^\infty_x u^{-4} {A} (u) du \right\}  
+ O_K \left(x (\log
x)^{2K+3} \right) \;.
\end{align}
Note that the boundary terms canceled when we integrated by parts.

The typical term in the series for ${A} (u)$ has the form 
 $C u  (\log u)^{m+1}/ \l^m$ and, assuming that $m \ll K$, 
we have
$$ 
\frac 1{\l^m} \int^x_1 u (\log u)^{m+1} du = \frac 1{\l^m} \frac{x^2}2
(\log x)^{m+1} \left( 1 + O_K \left( \frac{1}{\log x} \right)
\right) 
$$
and
$$ \frac 1{\l^m} \int^\infty_x u^{-3} (\log u)^{m+1} du = \frac 1{\l^m}
\frac 1{2x^2} (\log x)^{m+1} \left( 1 + O_K \left( \frac{ 1}{\log x}
\right) \right)\;.
$$
Using the first formula  and \eqref{The Sum Fnc}, we have
\begin{align*}
  - x^{-2} \int^x_1 {A} (u) du 
  = &-x^{-2} \int^x_1 u \left( \log u - 2 \frac{\log^2 u} \l 
  + 2 \sum^K_{k=1} \frac{(k-1)!}{(2k)!} 
\frac{ (\log u)^{2k+1} }{\l^{2k}} \right) du + O_K (1) \\
= & \log x \left\{ -\frac 12 + \frac{\log x}\l - 
\sum^K_{k=1} \frac{(k-1)!}{(2k)!} \left( \frac{\log x}{\l} \right)^{2k} \right\} 
\left( 1 + O_K \left( \frac{1}{\log x} \right) \right)
 + O_K (1) \,.
\end{align*}
For $x\ll T^C$,  the $O_K(1/ \log x)$ term 
contributes no more than $O_K(1)$ to this.  Hence,
\begin{equation}\label{int 1}
 -x^{-2} \int^x_1 {A} (u) du = \log x \left( - \frac 12 +
\frac{\log x}\l - \sum^K_{k=1} \frac{(k-1)!}{(2k)!} \left( \frac{\log
x} \l\right)^{2k} \right) + O_K (1) \;.
\end{equation}
Similarly, using the second formula  and \eqref{The Sum Fnc}, we obtain  
\begin{align}\label{int 2}
 3x^2 \int^\infty_x u^{-4} {A} (u) du & = 3x^2
\int^\infty_x u^{-3} \left( \log u - \frac{2\log^2 u} \l + 2 \sum^K_{k=1}
\frac{(k-1)!}{(2k)!} \frac{(\log u)^{2k+1}}{ \l^{2k}} \right) du + O_K (1)  
\notag   \\
& =  \log x \left\{ \frac 32  - 3 \frac{\log x}L +  3 \sum^K_{k=1} \frac{(k-
1)!}{(2k)!} \left( \frac{\log x}L \right)^{2k} \right\}  + O_K(1) \,.
\end{align}
Combining terms we obtain
\begin{equation}\label{ } 
\begin{aligned}
\int_0^T |R_1(x, t)|^2 \d t   = &\left(1 + o (1) \right) T \log x \left( 1 -2 \frac{\log x}{\l}
+ 2
\sum^K_{k=1} \frac{(k-1)!}{(2k)!} 
\left( \frac{\log x} \l \right)^{2k} \right) \\
& + O_K (T) + O_K \left(x (\log x)^{2K +3} \right)
\end{aligned}                  
\end{equation}
provided $x\ll T^{C_0}$. 

Recall from  \eqref{L=R} and \eqref{L} that we write
\begin{equation}\label{Sum over gammas}
2\pi  \sum_{0 < \g, \g' \leq T} x^{\i (\g - \g')} w (\g - \g') 
=   \mathcal{R}_1(x) + O\left( \left(\mathcal{R}_1(x)\mathcal{R}_2 (x) \right)^{1/2} \right)
  +    O (\log^3 T)\,,
\end{equation}
where, for a given $x$,  $\mathcal{R}_1(x)$  is the largest of  $\int_0^T |R_i(x, t)|^2 \d t  \; (i=1,2,3,4)$ and $\mathcal{R}_2(x) $ is the next largest.
Now, from the various estimates we see that our $\mathcal{R}_1(x)$ term is given by
\begin{align*}
(1+o(1)) \frac{T}{x^2} \log^2 T & \qquad \hbox{ if } 1 \le x \le
(\log T)^{3/4} \;,\\
o (T\log T) & \qquad \hbox{ if } \quad (\log T)^{3/4} < x \le (\log T)^{3/2} \;, 
\end{align*}
and by
\begin{align*}
 \left(1 + o_K (1) \right) T \log x \left( 1 -2 \frac{\log x}\l +
2 \sum^K_{k=1} \frac{(k-1)!}{(2k)!} 
\left( \frac{\log x} \l \right)^{2k} \right)  
& \qquad \hbox{ if } \quad (\log T)^{3/2} < x \leq   T^{1-\varepsilon} \;.
\end{align*}
In each of these ranges, it happens that the $\mathcal{R}_2(x)$ term is 
$o(\mathcal{R}_1(x))$. Hence, 
taking $x = T^\alpha$ in \eqref{Sum over gammas}, we find that
for $0 < \alpha <1$ and $T$ large,
\begin{align*}
&   2\pi \, \sum_{0 < \g, \g'  \leq T}
T^{\i\alpha (\g  - \g')} w (\g  - \g' )\\
& = (1 + o(1))  T^{1-2\alpha}  \log^2 T 
+ (1 + o_K (1)) \alpha T \log T \left( 1 -  4\alpha  + 2 \sum^K_{k=1} \frac{(k-1)!}{(2k)!}
\left(  2 \alpha \right) ^{2k} \right)  +  o_K (1)\;.
\end{align*}

Using  \eqref{F_1} and noting that $F_1(\alpha, T)$ is an even function of 
$\alpha$, we have proved Theorem~\ref{thm:F1}.

\end{proof}

\section{Proof of Proposition \ref{prop:Akell}}\label{sec:arithmetic}

We prove Proposition \ref{prop:Akell}, which is the arithmetic
portion of the calculation.  We first reduce $A_{k,\ell}$ to
a sum involving the arithmetic functions~$\Lambda_j$.
In Section~\ref{sec:lemmas} we state some lemmas which are
needed in the calculation, and 
in Section~\ref{sec:Lambda_j} we evaluate the sums of the~$\Lambda_j$.
Then we complete the proof of Proposition \ref{prop:Akell}
in Section~\ref{sec:proof of Akell}

\subsection{Reduction of $A_{k,\ell}$}
Recall that 
\begin{equation}
A_{k,\ell}(x) = \sum_{n\le x} \alpha_k(n) \alpha_\ell(n),
\end{equation}
where $\alpha_k$ is given by~\eqref{eqn:alphak}.

\begin{lemma}\label{lem:Akell} We have
\begin{equation}
A_{k,0} (x) =
\begin{cases}
x \log x + O (x)  \quad& \text{ if }\ \  k=0 \,,\\
- x \log^2 x + O(x \log x) \quad& \text{ if }\ \  k=1 \,,\\
O\bigl(x^{\tfrac12+\varepsilon}\bigr)
\quad 
&\text{ if }\ \  k \geq 2 \;.
\end{cases}
\end{equation}
and
\begin{equation}
A_{k,1} (x) =
\begin{cases}
 x \log^3 x + O (x \log^2 x) \quad &
\text{ if }\ \ k=1\;,\\
O\bigl(x^{\tfrac12+\varepsilon}\bigr)
&
\text{ if }\ \ k \geq 2
\;. \end{cases}
\end{equation}
For $2\leq \ell \leq  k $ we have
\begin{align}\label{Aklinlemma}
A_{k,\ell} (x) & = (k-1) (\ell -1) \sum_{p \leq x } \sum_{q \leq \frac
xp} \log^3 p \, \log^3 q \left( \sum_{m \leq \frac x{pq}} \Lambda_{k-2}
(m) \Lambda_{\ell -2} (m) \right) \cr
& \quad+ \sum_{p \leq x} \log^4 p \left( \sum_{m \leq \frac xp} \Lambda_{k-1}
(m) \Lambda_{\ell -1} (m) \right) \cr
& \quad\quad + O (x \log^{k + \ell} x ) . 
\end{align}
\end{lemma}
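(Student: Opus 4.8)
The plan is to decompose $A_{k,\ell}$ according to the supports of the arithmetic functions involved, peel off prime factors one at a time, and absorb everything but two clean pieces into the error term $O(x\log^{k+\ell}x)$.

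First I dispose of the cases in which an index is $0$ or $1$. Since $\Lambda_0$ is supported at $n=1$, we have $\alpha_0=-\Lambda$ and $\alpha_1(n)=\Lambda(n)\log n$, both supported on prime powers, so $A_{k,0}$ and $A_{k,1}$ are sums over prime powers only. From \eqref{eqn:alphak}, for $n=p$ prime, $\alpha_0(p)=-\log p$, $\alpha_1(p)=(\log p)^2$, and $\alpha_k(p)=0$ for $k\ge 2$ (the $\Lambda\log$-factor forces $n/d=p$, so the factor $\Lambda_{k-1}(1)$ vanishes). Hence for $k\ge 2$ only proper prime powers $p^m$ ($m\ge2$) contribute to $A_{k,0}$ and $A_{k,1}$; there are $O(x^{1/2})$ of these below $x$, each contributing $O(\log^{k+3}x)$ by \eqref{Alpha_k bound}, so $A_{k,0},A_{k,1}\ll x^{1/2+\ve}$. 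For the three remaining sums $A_{0,0}=\sum_{n\le x}\Lambda(n)^2$, $A_{1,0}=-\sum_{n\le x}\Lambda(n)^2\log n$ and $A_{1,1}=\sum_{n\le x}\Lambda(n)^2\log^2 n$, the proper prime powers again contribute $O(x^{1/2+\ve})$, while $\Lambda(p)=\log p$ and partial summation from $\vartheta(x)=x+O(x\log^{-2}x)$ give $\sum_{p\le x}\log^j p=x\log^{j-1}x+O(x\log^{j-2}x)$, which produces the three stated asymptotics.

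Now fix $2\le\ell\le k$. Isolating the $\Lambda\log$-factor, I write $\alpha_k(n)=\sum_{p^a\mid n,\ a\ge1}a(\log p)^2\,\Lambda_{k-1}(n/p^a)$, and split $\alpha_k=\alpha_k^{(1)}+\alpha_k^{(2)}$ according to $a=1$ or $a\ge2$; each piece is nonnegative and at most $\alpha_k$. Interchanging summation and using $\Lambda_{k-1}(m)\le\log^{k-1}m$, $|\alpha_\ell(n)|\ll\log^{\ell+1}n$ (from \eqref{Lambda_j bound} and \eqref{Alpha_k bound}), and the convergence of $\sum_p(\log p)^2\sum_{a\ge2}ap^{-a}\ll\sum_p(\log p)^2p^{-2}$, one gets $\sum_{n\le x}\alpha_k^{(2)}(n)|\alpha_\ell(n)|\ll x\log^{k+\ell}x$, and likewise with $k$ and $\ell$ interchanged. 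Hence the $\alpha^{(2)}$-parts are negligible and
\[
A_{k,\ell}(x)=\sum_{n\le x}\Bigl(\sum_{p\mid n}(\log p)^2\Lambda_{k-1}(n/p)\Bigr)\Bigl(\sum_{q\mid n}(\log q)^2\Lambda_{\ell-1}(n/q)\Bigr)+O(x\log^{k+\ell}x).
\]
Split the double sum over primes $p,q\mid n$ into the diagonal $p=q$ and the off-diagonal $p\ne q$. On the diagonal, substituting $n=pm$ gives exactly $\sum_{p\le x}\log^4 p\sum_{m\le x/p}\Lambda_{k-1}(m)\Lambda_{\ell-1}(m)$, the second main term.

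For the off-diagonal part $pq\mid n$, so writing $n=pqr$ it equals $\sum_{p\ne q}(\log p)^2(\log q)^2\sum_{r\le x/(pq)}\Lambda_{k-1}(qr)\Lambda_{\ell-1}(pr)$. The arithmetic heart of the argument is the identity
\[
\Lambda_j(pr)=j\,(\log p)\,\Lambda_{j-1}(r)\qquad(p\text{ prime},\ p\nmid r,\ j\ge1),
\]
which is immediate from the interpretation of $\Lambda_j$ as a weighted count of ordered factorizations into $j$ prime powers: exactly one factor is divisible by $p$, it must equal $p$, and there are $j$ slots for it. Applying this with $j=k-1$ and $j=\ell-1$ on the set where $p\nmid r$ and $q\nmid r$ converts the off-diagonal sum into $(k-1)(\ell-1)\sum_{p\ne q}\log^3 p\log^3 q\sum_{r}\Lambda_{k-2}(r)\Lambda_{\ell-2}(r)$; restoring the full range of $r$ and adjoining the diagonal $p=q$ then gives the first main term. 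The main obstacle is to verify that each discarded or restored piece is genuinely $O(x\log^{k+\ell}x)$ — the terms with $p\mid r$ or $q\mid r$, the proper-prime-power terms from the previous step, and the diagonal $p=q$ in the first main term. Each of these is controlled by $\Lambda_i(m)\le\log^i m$ together with the elementary bounds $\sum_q(\log q)^cq^{-2}=O(1)$ and $\sum_{p\le x}(\log p)^cp^{-1}\ll\log^c x$; for example the $q\mid r$ terms are $\ll\log^{k+\ell-2}x\cdot x\sum_{p}(\log p)^2p^{-1}\sum_{q}(\log q)^2q^{-2}\ll x\log^{k+\ell}x$, an estimate that is tight, so the sharp form of the second bound is needed. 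One must also treat separately the degenerate cases $\ell=2$, where $\Lambda_{\ell-2}=\Lambda_0$ is the convolution identity, and $k-1=1$ or $\ell-1=1$; the identity above remains valid in all of these and the estimates only become easier.
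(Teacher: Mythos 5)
Your proof is correct and follows essentially the same route as the paper's: isolate the $\Lambda\log$ factor in $\alpha_k$, discard the $a\ge 2$ prime-power contributions, split into diagonal and off-diagonal primes, and apply the identity $\Lambda_j(pr)=j(\log p)\Lambda_{j-1}(r)$ for $p\nmid r$, with all corrections absorbed into $O(x\log^{k+\ell}x)$. The only cosmetic difference is that you justify that identity by counting ordered factorizations into prime powers, whereas the paper derives it as the $a=1$ case of a binomial expansion for $\Lambda_k(p^an)$.
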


\begin{proof}

The form of $\a_k (n)$ is different when $k=0$ and $1$ from what it 
is for larger $k$, and this will be reflected in our estimates for $ A_{k,\ell} (x)$.
We therefore treat these cases separately.

First consider the case of  $A_{k,0} (x)$. By the prime number theorem we have
\begin{equation}\label{A_{0,0}}
A_{0,0} (s)  = \sum_{n\leq x} \Lambda^2 (n) = x \log x + O (x) 
\end{equation}
and
\begin{equation}\label{A_{1,0}}
A_{1,0} (x)  = - \sum_{n\leq x} \Lambda^2 (n) \log n = - x \log^2 x + O
(x \log x) \,.
\end{equation}
For $k \geq 2$ we have
\begin{align}\label{eqn:Ak0estimate}
A_{k,0} (x) =\mathstrut& - \sum_{n\leq x} \Lambda (n) 
(\Lambda _{k-1} * \Lambda \log) (n) \cr
=\mathstrut&
 - \sum_{p^a \leq x} \log(p) (\Lambda_{k-1} * \Lambda \log)(p^a) \cr
\ll\mathstrut & 
\sum_{\ontop{p^a \le x}{a\ge k}} p^{a \varepsilon} \cr
\ll\mathstrut & x^{\tfrac12+\varepsilon} .
\end{align}
We have used the fact that $(\Lambda_{k-1} * \Lambda \log)(n) \ll n^\varepsilon$
and that this function vanishes unless $n$ is a product of at least $k$ (not necessarily distinct) primes 
and $k\ge 2$.

Next we consider $A_{k,1} (x)$ for $k \geq 1$.  By the prime number theorem,
\begin{equation}\label{A11}
 A_{1,1} (x) = \sum_{n\leq x} (\Lambda (n) \log n)^2 = x \log^3 x + O
(x\log^2 x) \;. 
\end{equation}
If $k \ge 2$ then
\begin{align}
 A_{k,1}(x) = \mathstrut & \sum_{n\leq x} \Lambda (n)   \log n \, 
\left(\Lambda_{k-1} * \Lambda \log \right) (n) \cr
\ll \mathstrut &  x^{\tfrac12+\varepsilon},
\end{align}
exactly as in~\eqref{eqn:Ak0estimate}.

We now come to the general case of $A_{k,\ell} (x)$ with $k \geq \ell \geq 2$. 
We have
\begin{align}\label{A_{k,ell}(x)}
A_{k,\ell}(x)
=\mathstrut & \sum_{n\leq x} \a_k(n) \a_l(n)  \cr
=\mathstrut &  \sum_{n\leq x} \left( \sum_{p^a|n} a
\log^2 p \;\Lambda_{k-1} \left( \frac{n}{p^ a}\right)\right) \left(
\sum_{q^b|n} b \log^2 q \;\Lambda_{\ell -1} \left( \frac n{q^b} \right)
\right) \cr
=\mathstrut &  \sum_{n\leq x} \left( \sum_{p|n} 
\log^2 p \;\Lambda_{k-1} \left( \frac{n}{p}\right)\right) \left(
\sum_{q|n}  \log^2 q \;\Lambda_{\ell -1} \left( \frac n{q} \right)
\right) \cr
&\quad + O(x\log^{k+\ell} x )\cr
=\mathstrut & \sum_{p \leq x}
    \sum_{\substack{q \leq \frac x{p} \\ q \ne p }}
   \log^2 p \log^2 q \left(
    \sum_{\substack {n \leq x\\ p q |n }}
     \Lambda_{k-1} \left( \frac n{p}\right)
     \Lambda_{\ell-1} \left( \frac n{q} \right) \right) \cr
& \quad +\sum_{p \leq x} \log^4 p \left(
\sum_{\substack{ n \leq x \\ p|n} }
     \Lambda_{k-1} \left( \frac n{p} \right)
      \Lambda_{\ell -1} \left( \frac n{p} \right)
\right) \cr
&\quad + O(x\log^{k+\ell} x )\cr
 =\mathstrut & {A}_1 + {A}_2 + O(x\log x^{k+\ell} x )\;,
\end{align}
say. The estimate on the third line above was done as follows.
By symmetry, it is sufficient to estimate
\begin{equation}\label{eqn:abtobound}
 \sum_{\substack{p^a \leq x\\ a\ge1}}
     \sum_{\substack{q^b \leq \frac x{p^a}\\ b\ge 2}}
     ab \log^2 p \; \log^2 q \left( \sum_{m \leq \frac x{p^a q^b}}
\Lambda_{k-1} (mq^b) \; \Lambda_{\ell -1} (mp^a)
\right) \;.
\end{equation}
By the trivial bound \eqref{Lambda_j bound} this is
\begin{align} \label{usetrivialbound}
\ll\mathstrut & \sum_{\substack{p^a \leq x\\ a\ge1}}
     \sum_{\substack{q^b \leq \frac x{p^a}\\ b\ge 2}}      ab \log^2 p \; \log^2 q \left( \sum_{m \leq \frac x{p^a q^b}}
(\log mq^b)^{k-1} \; (\log mp^a)^{\ell-1}
\right) \cr
\ll\mathstrut & x (\log x)^{k+\ell-2}
       \sum_{\substack{p^a \leq x\\ a\ge1}}
             \frac{a\log^2 p}{p^a} 
     \sum_{\substack{q^b \leq \frac x{p^a}\\ b\ge 2}}    
      \frac{b \log^2 q}{q^b}
\cr
\ll\mathstrut & x (\log x)^{k+\ell-2}
       \sum_{\substack{p^a \leq x\\ a\ge1}}
             \frac{a\log^2 p}{p^a}
\cr
\ll\mathstrut & x (\log x)^{k+\ell},
\end{align}
as claimed.  One can use Lemma~\ref{Lambdaj pn} on the inner summand
of \eqref{eqn:abtobound} to improve this bound by a power of
$\log x$, but this will not affect our final result.

It is clear that ${A}_2$ equals the second main term in
\eqref{Aklinlemma}, so it remains to put ${A}_1$ in the appropriate
form.  The point is that if $p$ is prime and $p\nmid m$ then
\begin{equation}\label{eqn:Lambda pm}
\Lambda_k(p m) = k (\log p) \Lambda_{k-1}(m).
\end{equation}
See the proof of Lemma~\ref{Lambdaj pn}.
Thus
\begin{align}\label{{A}_1} 
{A}_1 = \mathstrut &
 (k-1)(\ell-1)\sum_{p \leq x} \sum_{\substack{ q \leq \frac xp\\ q \ne p}} 
\log^3 p \,\log^3 q
\left(
\sum_{\substack{m \leq \frac x{pq} \\ (m,pq)=1} }
        \Lambda_{k-2} (m) \Lambda_{\ell -2} (m)
\right) 
\cr
&\mathstrut +
\sum_{p \leq x} \sum_{\substack{ q \leq \frac xp\\ q \ne p}}
\log^2 p \,\log^2 q \left( \sum_{\substack{ m \leq \frac x{pq}  \\ (m,qp)>1}} \Lambda_{k-1} (mq)
\Lambda_{\ell -1} (mp) \right) .
\end{align}
The second term can be estimated using the trivial bound~\eqref{Lambda_j bound},
exactly as in \eqref{usetrivialbound}, showing that it is
$\ll x (\log x)^{k+\ell}$.
For the first term, removing the conditions $(m, pq)=1$
and $q\not = p$ and estimating with the trivial bound~\eqref{Lambda_j bound}
gives an even smaller error term.  This completes the proof of
Lemma~\ref{lem:Akell}.

\end{proof}


\subsection{Some lemmas}\label{sec:lemmas}
The following lemmas concerning
the $\Lambda_j$ function
and
sums over primes
are required in the next subsection.

The first Lemma is an improvement on the trivial bound \eqref{Lambda_j bound}
for $\Lambda_j(n)$ when $n$ has a known prime factor.

\begin{lemma}\label{Lambdaj pn}
If $p|m$, then
$$ \Lambda_k (m) \leq k \log p (\log m)^{k-1}. $$
\end{lemma}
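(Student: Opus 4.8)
The plan is to induct on $k$, using only the multiplicative recursion for $\Lambda_k$ coming from \eqref{eqn:vonM} together with the elementary identity $\sum_{d\mid m}\Lambda(d)=\log m$. First I would record the structural facts needed. Since \eqref{eqn:vonM} exhibits $\Lambda_k$ as the $k$-fold Dirichlet convolution of $\Lambda$, we have $\Lambda_k=\Lambda*\Lambda_{k-1}$, and because $\Lambda$ is supported on prime powers this reads $\Lambda_k(m)=\sum_{q^{\,j}\mid m}(\log q)\,\Lambda_{k-1}(m/q^{\,j})$, the sum running over prime powers $q^{\,j}$ with $j\ge1$ dividing $m$; summing the weights alone gives $\sum_{q^{\,j}\mid m}\log q=\sum_{d\mid m}\Lambda(d)=\log m$. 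Along the way I would also derive the refinement recorded as \eqref{eqn:Lambda pm}: writing $\Lambda_k(n)=\sum_{n=n_1\cdots n_k}\Lambda(n_1)\cdots\Lambda(n_k)$ over ordered $k$-tuples, if $n=pm$ with $p\nmid m$ then in every such factorization exactly one factor $n_i$ is divisible by $p$, and that factor, being a prime power of $p$-valuation $1$, must equal $p$; choosing its index in $k$ ways and letting the remaining factors run over all factorizations of $m$ yields $\Lambda_k(pm)=k(\log p)\Lambda_{k-1}(m)$.

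For the bound itself, the base case $k=1$ is immediate: if $p\mid m$ and $\Lambda(m)\ne0$ then $m$ is a power of $p$, so $\Lambda(m)=\log p$, which is the asserted inequality (with equality). For the inductive step with $k\ge2$, let $p^a\,\|\,m$ and split the recursion $\Lambda_k(m)=\sum_{q^{\,j}\mid m}(\log q)\,\Lambda_{k-1}(m/q^{\,j})$ into the single term $q^{\,j}=p^a$ and all the others. The term $q^{\,j}=p^a$ is controlled by the trivial bound \eqref{Lambda_j bound}: $(\log p)\Lambda_{k-1}(m/p^a)\le(\log p)(\log m)^{k-1}$. For every other $q^{\,j}$ one checks $p\mid m/q^{\,j}$, so the inductive hypothesis gives $\Lambda_{k-1}(m/q^{\,j})\le(k-1)(\log p)(\log(m/q^{\,j}))^{k-2}\le(k-1)(\log p)(\log m)^{k-2}$. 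Since the weights of these remaining terms sum to $\log m-\log p$, we get
\[
\Lambda_k(m)\le(\log p)(\log m)^{k-1}+(k-1)(\log p)(\log m)^{k-2}\bigl(\log m-\log p\bigr)=k(\log p)(\log m)^{k-1}-(k-1)(\log p)^2(\log m)^{k-2},
\]
which is $\le k(\log p)(\log m)^{k-1}$, completing the induction.

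I do not expect a serious obstacle: the argument is elementary and the gain of a factor $\log p$ comes exactly from replacing the trivial bound by the hypothesis on all but one prime-power divisor. The only point needing care is the bookkeeping in the inductive step — isolating the one divisor $p^a$ on which \eqref{Lambda_j bound} is used while verifying that $p$ still divides every other quotient $m/q^{\,j}$ (including the sub-cases $q=p$ with $j<a$), and confirming that $\sum_{d\mid m}\Lambda(d)=\log m$ is being applied to precisely the weights that occur, so that the leftover weight is $\log m-\log p$. Degenerate possibilities such as $m/q^{\,j}=1$ or $k=2$ cause no trouble since $\Lambda_{k-1}(1)=0$.
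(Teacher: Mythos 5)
Your proof is correct, but it takes a genuinely different route from the paper's. The paper does not induct on $k$: it writes $m=p^an$ with $(p,n)=1$, uses the coprime splitting of the $k$-fold convolution together with the closed form $\Lambda_j(p^a)=\binom{a-1}{j-1}(\log p)^j$ to get the exact identity $\Lambda_k(p^an)=\sum_{j\ge1}\binom{k}{j}\binom{a-1}{j-1}(\log p)^j\Lambda_{k-j}(n)$, and then bounds $\binom{a-1}{j-1}\le a^{j-1}/(j-1)!$ and $\Lambda_{k-j}(n)\le(\log n)^{k-j}$ so that the whole sum collapses, via the binomial theorem, to $k\log p\,(\log p^an)^{k-1}$; the identity \eqref{eqn:Lambda pm} then falls out by setting $a=1$. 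You instead peel off a single factor of $\Lambda$ via $\Lambda_k=\Lambda*\Lambda_{k-1}$, isolate the one divisor $p^a$ (where only the trivial bound \eqref{Lambda_j bound} is available), apply the inductive hypothesis to every other quotient $m/q^{\,j}$ (correctly checking that $p$ still divides it, including when $q=p$ with $j<a$), and close the induction using $\sum_{d\mid m}\Lambda(d)=\log m$ to control the leftover weight $\log m-\log p$; the term you discard, $(k-1)(\log p)^2(\log m)^{k-2}$, is nonnegative, so the bound follows. Your argument is more self-contained --- it needs neither the composition count for $\Lambda_j(p^a)$ nor the coprime binomial identity --- and it makes transparent where the extra factor $\log p$ comes from; the paper's computation is shorter once the exact formula is in hand and delivers \eqref{eqn:Lambda pm} (which is needed elsewhere in Section 6) as an immediate byproduct, whereas you must supply it by the separate, also correct, counting argument over ordered factorizations.
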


\begin{proof} If $(p,n)=1$ then
$$ \Lambda_k (p^a n) = \sum^{\min (a,k)}_{j=1} \binom{k}{j} \Lambda_j
(p^a ) \Lambda_{k-j} (n) = \sum^{k}_{j=1} \binom{k}{j} \binom{a-1}{j-
1} (\log p)^j \Lambda_{k-j} (n) \,.$$
Setting $a=1$, we obtain~\eqref{eqn:Lambda pm}.

Now
$$ \binom{a-1}{j-1} = \frac{(a-1) (a-2) \cdots ( a-j +1)}{(j-1)!} \leq
\frac{a^{j-1}}{(j-1)!} \;.
$$
Hence
\begin{align*}
\Lambda_k (p^a n) & \leq    \sum^{k}_{j=1}  \frac kj
\binom{k-1}{j-1} \frac{a^{j-1}}{(j-1)!} (\log p)^{j} (\log n)^{k-j} \\
& \leq k \log p \sum^{k}_{j=1} \binom{k-1}{j-1} (\log p^a )^{j-1}
(\log n)^{(k-1)-(j-1)} \;.
\end{align*}
Setting $j-1 =  i$, this is
\begin{align*}
& = k \log p \sum^{k-1}_{i=0} \binom{k-1}{i}
(\log p^a)^i (\log n)^{(k -1) -i} \\
& \leq k \log p (\log p^a n)^{k-1} \;.
\end{align*}
\end{proof}

\begin{lemma}\label{lem:logplogxp}
If $u\ge 2$ and $v \geq 1$ then
 \begin{equation}
  \sum_{p \leq x} \frac{\log^{u}p}p \left( \log \frac xp \right)^{v}
=  \frac{(u-1)! v!}{(u+v)!} \log^{u+v} x
+
O\left( \frac{(u-1)! v!}{(u+v-1)!} \log^{u+v-1} x \right) \;.
\end{equation}
\end{lemma}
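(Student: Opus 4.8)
The plan is to view the sum as a Riemann--Stieltjes integral against the prime-summatory function $P(t):=\sum_{p\le t}\frac{\log p}{p}$, which by Mertens' theorem satisfies $P(t)=\log t+O(1)$. Writing $E(t):=P(t)-\log t$, so that $E(t)\ll 1$, and abbreviating $L:=\log x$, partial summation gives
\[
\sum_{p\le x}\frac{\log^{u}p}{p}\left(\log\frac xp\right)^{v}
=\int_{3/2}^{x}(\log t)^{u-1}\left(\log\frac xt\right)^{v}\,dP(t),
\]
and I would split $dP(t)=\frac{dt}{t}+dE(t)$ into a main term and an error term.

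For the main term, the substitution $w=\log t$ turns $\int_{3/2}^{x}(\log t)^{u-1}(\log(x/t))^{v}\,\frac{dt}{t}$ into $\int_{\log(3/2)}^{L}w^{u-1}(L-w)^{v}\,dw$. Extending the lower limit down to $0$ changes this by $O(L^{v})$, and the resulting complete integral is a Beta integral:
\[
\int_{0}^{L}w^{u-1}(L-w)^{v}\,dw=L^{u+v}\int_{0}^{1}s^{u-1}(1-s)^{v}\,ds
=L^{u+v}\,\frac{\Gamma(u)\Gamma(v+1)}{\Gamma(u+v+1)}=\frac{(u-1)!\,v!}{(u+v)!}\,L^{u+v},
\]
which is exactly the claimed main term.

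For the error term I would integrate by parts in the Stieltjes sense, obtaining
\[
\int_{3/2}^{x}(\log t)^{u-1}\left(\log\frac xt\right)^{v}dE(t)
=\Big[(\log t)^{u-1}\big(\log(x/t)\big)^{v}E(t)\Big]_{3/2}^{x}
-\int_{3/2}^{x}E(t)\,\frac{d}{dt}\Big[(\log t)^{u-1}\big(\log(x/t)\big)^{v}\Big]\,dt.
\]
The boundary term at $t=x$ vanishes because $v\ge 1$ forces $\big(\log(x/x)\big)^{v}=0$, and the boundary term at $t=3/2$ is $O(L^{v})$. In the remaining integral one has $\frac{d}{dt}[(\log t)^{u-1}(\log(x/t))^{v}]=t^{-1}\big[(u-1)(\log t)^{u-2}(\log(x/t))^{v}-v(\log t)^{u-1}(\log(x/t))^{v-1}\big]$, so using $E(t)\ll 1$ and the triangle inequality, then the substitution $w=\log t$ and again replacing $\log(3/2)$ by $0$ in the lower limit, the integral is
\[
\ll(u-1)\int_{0}^{L}w^{u-2}(L-w)^{v}\,dw+v\int_{0}^{L}w^{u-1}(L-w)^{v-1}\,dw
=(u-1)\frac{(u-2)!\,v!}{(u+v-1)!}L^{u+v-1}+v\frac{(u-1)!\,(v-1)!}{(u+v-1)!}L^{u+v-1},
\]
which equals $2\frac{(u-1)!\,v!}{(u+v-1)!}L^{u+v-1}$. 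Here the hypotheses enter precisely: $u\ge 2$ makes the exponent $u-2$ and the factorial $(u-2)!$ nonnegative, and $v\ge 1$ makes $(v-1)!$ meaningful and kills the boundary term at $x$. Since $L^{v}=o\big(L^{u+v-1}\big)$ when $u\ge 2$, the stray $O(L^{v})$ terms are absorbed into the stated error, completing the proof.

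The calculation is essentially routine; the only point needing a little care is the dependence of the implied constants on $u$ and $v$, but this is painless because the weight $(\log t)^{u-1}(\log(x/t))^{v}$ is a product of two powers of logarithms, so every integral that appears is a Beta integral with an explicit factorial value, and the error constant falls out in the advertised shape. One could instead prove the lemma by induction on $v$, starting from the classical $\sum_{p\le x}(\log^{u}p)/p=\frac1u\log^{u}x+O(\log^{u-1}x)$, but the single partial summation above is more direct.
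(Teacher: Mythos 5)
Your proof is correct and follows essentially the same route as the paper: partial summation against $\sum_{p\le t}\log p/p=\log t+O(1)$, a Beta-integral evaluation of the main term, and an integration by parts plus Beta-integral bound for the error, with the hypotheses $u\ge 2$, $v\ge 1$ entering in exactly the same places. The only cosmetic difference is your lower limit $3/2$ (the paper starts at $1$, where $E(1)=0$ kills the boundary term outright), which you handle correctly with the absorbed $O(\log^v x)$.
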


\begin{proof}
Let $F(t)=\sum_{p\leq t }\log p/p$,  so  $F(t)=\log t +E(t)$, where $E(1)=0$ and $E(t) \ll 1$.
We have
\begin{align*}
  \sum_{p \leq x} \frac{\log^{u} p}p
  \left( \log \frac xp \right)^{v}
=\mathstrut &
\int_{1}^{x}\log^{u-1} t \, \left(\log\frac{x}{t}\right)^{v}\,dF(t) \\
=\mathstrut &\int_{1}^{x}\log^{u-1} t \, \left(\log\frac{x}{t}\right)^{v}\,d (\log t)
+\int_{1}^{x}\log^{u-1} t \, \left(\log\frac{x}{t}\right)^{v}\,dE(t) \\
=\mathstrut& I+J,
\end{align*}
say. In $I$ replace $t$ by $x^{\theta}$ and use Euler's Beta-integral to obtain
\begin{equation}
I= \log^{u+v} x \int_{0}^{1} \theta^{u-1} (1-\theta)^{v}\,d\theta
= \frac{(u-1)! v!}{(u+v)!}\log^{u+v} x,
\end{equation}
which is the main term above.
In the error term we integrate by parts and  find that
\begin{equation}
J= E(t) \log^{u-1} t \left(\log \frac{x}{t}\right)^{v}\bigg|_{1}^{x}
-\int_{1}^{x} E(t) \left(
(u-1) \log^{u-2} t \left(\log\frac{x}{t}\right)^{v}
-
v \log^{u-1} t  \left(\log \frac{x}{t}\right)^{v-1}
\right) \frac{dt}{t}.
\end{equation}
The first term vanishes.
In the integral make the change of variable
$t=x^{\theta}$ and again use the Beta-integral to obtain
\begin{align*}
J \ll \mathstrut & \log^{u+v-1}x\   (u-1) \int_{0}^{1} \theta^{u-2} (1-\theta)^{v}\,d\theta
+\log^{u+v-1}x\   v \int_{0}^{1} \theta^{u-1} (1-\theta)^{v-1}\,d\theta  \\
=\mathstrut &2 \frac{(u-1)! v!}{(u+v-1)!} \log^{u+v-1} x,
\end{align*}
as claimed.
\end{proof}

\subsection{The Estimation of \,$ \sum_{n \leq x} \Lambda_k (n) \Lambda_\ell(n)$}\label{sec:Lambda_j}

We evaluate the sums over $\Lambda_j$ which appear in Lemma~\ref{lem:Akell}

Let
$$
 \S_{k, l} (x) = \sum_{n \leq x} 
\Lambda_k (n) \Lambda_\ell(n) \,.
$$
In this subsection we prove the following theorem.
\begin{theorem}\label{thm:Sksum} 
If  $k > \ell \geq 1$, then
\begin{equation} \label{Skl}
\S_{k, \ell} (x) \ll   x \log^{k + \ell - 2} x \,.
\end{equation}
If $k \geq 1$, then
\begin{equation}\label{Skk} 
 \S_{k,k} (x) = \frac{k!}{(2k-1)!} x \log^{2k-1} x 
+ O(x \log^{2k-2}x)  \,.
\end{equation}
\end{theorem}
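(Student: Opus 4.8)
The whole argument hinges on two elementary facts about $\Lambda_j$. First, $\Lambda_j(n)=0$ unless $\omega(n)\le j\le\Omega(n)$: in the convolution $\Lambda*\cdots*\Lambda$ ($j$ factors) each factor is supported on prime powers, forcing $\Omega(n)\ge j$, while having only $j$ slots forces $\omega(n)\le j$. Consequently, for squarefree $n$ one has $\Lambda_j(n)\ne0$ only when $\omega(n)=j$, and then, since each of the $j$ convolution factors must be a distinct prime, $\Lambda_j(n)=j!\prod_{p\mid n}\log p$. The upshot is that for $k\ne\ell$ \emph{no} squarefree $n$ contributes to $\S_{k,\ell}(x)$, whereas for $k=\ell$ the squarefree $n$ carry the main term.

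I would first bound the non-squarefree contribution, which handles $k>\ell$ entirely. If $n\le x$ is not squarefree then $p^{2}\mid n$ for some prime $p$; since $p\mid n$, Lemma~\ref{Lambdaj pn} gives $|\Lambda_k(n)|\le k\log p\,(\log x)^{k-1}$ and $|\Lambda_\ell(n)|\le\ell\log p\,(\log x)^{\ell-1}$, so by a union bound over such $p$,
\[
\sum_{\substack{n\le x\\ n\ \mathrm{not\ squarefree}}}\!\!|\Lambda_k(n)\Lambda_\ell(n)|
\ \le\ k\ell\,(\log x)^{k+\ell-2}\sum_{p}\log^{2}p\,\Big\lfloor\frac{x}{p^{2}}\Big\rfloor
\ \ll\ x(\log x)^{k+\ell-2},
\]
since $\sum_p p^{-2}\log^2 p$ converges. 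For $k>\ell$ the squarefree terms vanish, so this is all of $\S_{k,\ell}(x)$, proving \eqref{Skl}.

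For $k=\ell$, the squarefree part equals $(k!)^{2}\sum_{n\le x,\ n\ \mathrm{sqfree},\ \omega(n)=k}\prod_{p\mid n}\log^{2}p$; rewriting this as a sum over ordered $k$-tuples of distinct primes of product at most $x$ and then dropping the distinctness condition (the repeated-prime tuples cost only $O(x\log^{2k-2}x)$, bounded as above) turns the squarefree part into $k!\,T_{k}(x)+O(x\log^{2k-2}x)$, where
\[
T_{k}(x)=\sum_{p_{1}\cdots p_{k}\le x}\ \prod_{i=1}^{k}\log^{2}p_{i}
=\sum_{p\le x}\log^{2}p\;T_{k-1}(x/p).
\]
I would then show by induction that $T_{k}(x)=\tfrac{1}{(2k-1)!}\,x\log^{2k-1}x+O(x\log^{2k-2}x)$: the base case $T_{1}(x)=\sum_{p\le x}\log^{2}p=x\log x+O(x)$ is the prime number theorem by partial summation, and the inductive step comes from substituting the hypothesis into the recursion and applying Lemma~\ref{lem:logplogxp} with $u=2$, $v=2k-3$ to the main piece $c_{k-1}\,x\sum_{p\le x}\frac{\log^{2}p}{p}\bigl(\log\frac{x}{p}\bigr)^{2k-3}$, the primes near $x$ and the secondary terms being absorbed into the error (using $\sum_{p\le x}\log^{2}p/p\ll\log^{2}x$ when $k=2$). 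This yields $c_{k}=\tfrac{(2k-3)!}{(2k-1)!}\,c_{k-1}$ with $c_{1}=1$, hence $c_{k}=1/(2k-1)!$. Therefore $\S_{k,k}(x)=k!\,T_{k}(x)+O(x\log^{2k-2}x)=\tfrac{k!}{(2k-1)!}\,x\log^{2k-1}x+O(x\log^{2k-2}x)$, which is \eqref{Skk}.

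No analytic input is needed beyond the prime number theorem (already used above for $A_{0,0}$ and $A_{1,0}$) together with Lemmas~\ref{Lambdaj pn} and~\ref{lem:logplogxp}. The only delicate points are bookkeeping: confirming the identity $\Lambda_j(n)=j!\prod_{p\mid n}\log p$ on squarefree $n$, and carrying the Euler Beta-integral constants from Lemma~\ref{lem:logplogxp} cleanly through the induction so that $c_k$ collapses to exactly $1/(2k-1)!$. Both the non-squarefree estimate and the errors generated in the recursion sit comfortably below the claimed $O(x\log^{k+\ell-2}x)$, so there is no real obstacle, only careful accounting.
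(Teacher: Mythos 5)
Your proof is correct, but it follows a genuinely different route from the paper's. The paper proves the theorem by a double induction built on Proposition~\ref{prop:Skl}, a recursion $\S_{k,\ell}(x)=\ell\sum_{p\le x}\log^2p\;\S_{k-1,\ell-1}(x/p)+O(x\log^{k+\ell-2}x)$ obtained by unfolding $\Lambda_k=\Lambda*\Lambda_{k-1}$ and using $\Lambda_\ell(mp)=\ell\log p\,\Lambda_{\ell-1}(m)$ for $(m,p)=1$; the off-diagonal bound \eqref{Skl} then requires its own base case $\S_{k,1}(x)\ll x^{1/2+\varepsilon}$ and an induction on $\ell$, while \eqref{Skk} is the diagonal instance of the same recursion. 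You instead organize everything around the support of $\Lambda_j$: the observation that $\Lambda_j(n)\ne0$ forces $\omega(n)\le j\le\Omega(n)$, together with the identity $\Lambda_j(n)=j!\prod_{p\mid n}\log p$ on squarefree $n$, kills the squarefree contribution entirely when $k\ne\ell$ and isolates the diagonal main term as $k!\,T_k(x)$ with $T_k(x)=\sum_{p_1\cdots p_k\le x}\prod_i\log^2p_i$. This buys you two things: \eqref{Skl} becomes a one-step estimate (a union bound over $p^2\mid n$ plus Lemma~\ref{Lambdaj pn}) with no induction at all, and the induction for \eqref{Skk} runs on the cleaner object $T_k$, whose recursion $T_k(x)=\sum_{p\le x}\log^2p\,T_{k-1}(x/p)$ feeds directly into Lemma~\ref{lem:logplogxp} with $u=2$, $v=2k-3$ and yields $c_k=(2k-3)!\,c_{k-1}/(2k-1)!=1/(2k-1)!$ exactly as needed. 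The paper's recursion has the advantage of handling all pairs $(k,\ell)$ uniformly and of being reusable elsewhere in Section 6, but your decomposition makes the combinatorial source of the main term more transparent. The delicate points you flag (the $k=2$ error term where $v=0$ falls outside Lemma~\ref{lem:logplogxp}, and the cost of repeated-prime tuples) are real but handled correctly; I see no gap.
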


The theorem is proved by induction, using the following proposition.

\begin{proposition}\label{prop:Skl}
For $k\ge\ell\ge1$ we have
\begin{equation}\label{recursion}
 \S_{k, \ell} (x) 
= \ell \sum_{p \leq x} \log^2 p \ %
\S_{k-1,\ell-1}({x}/{p})
+ O(x \log^{k + \ell -2}x) .
\end{equation}
\end{proposition}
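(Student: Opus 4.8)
The plan is to prove the recursion in Proposition~\ref{prop:Skl} by expanding one copy of $\Lambda_k(n)$ via its definition as a convolution. Writing $\Lambda_k = \Lambda_{k-1}*\Lambda$, one has for $n>1$ the identity $\Lambda_k(n) = \sum_{p^a\|n} a(\log p)\Lambda_{k-1}(n/p^a)$, or more usefully, via \eqref{eqn:Lambda pm}, that when $p\|n$ (i.e.\ $p|n$ but $p^2\nmid n$) we have $\Lambda_k(n)=k(\log p)\Lambda_{k-1}(n/p)$. So the idea is to sort $n\le x$ according to a prime $p$ dividing $n$, write $n=pm$, and use $\Lambda_k(pm)=k(\log p)\Lambda_{k-1}(m)$ on the terms where $p\nmid m$, treating the prime-power contributions $p^2|n$ as an error term.

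First I would write, for each $n$ with $\omega(n)\ge 1$ distinct prime factors, a weighted decomposition: pick out one prime factor. A clean way is to use the elementary identity $(\log n)\Lambda_k(n) = \sum_{d|n}\Lambda(d)\Lambda_{k-1}(n/d) \cdot(\text{something})$ — but the most transparent route is simply: for any arithmetic function supported on integers with $\ge 1$ prime factor,
\[
\sum_{n\le x}\Lambda_k(n)\Lambda_\ell(n)
= \sum_{p}\sum_{\substack{m\le x/p\\ p\nmid m}}\Lambda_k(pm)\Lambda_\ell(pm)
\;\Big/\;(\text{overcount by }\omega),
\]
which is awkward because of the $1/\omega(n)$ factor. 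Instead, to avoid the overcounting, I would fix the smallest prime factor $p$ of $n$, write $n=p^a m$ with $p\nmid m$, $a\ge 1$. The main term comes from $a=1$, where $\Lambda_k(pm)=k(\log p)\Lambda_{k-1}(m)$ and $\Lambda_\ell(pm)=\ell(\log p)\Lambda_{\ell-1}(m)$, giving
\[
k\ell\sum_{p}\log^2 p\sum_{\substack{m\le x/p\\ p\nmid m,\ p<P^-(m)}}\Lambda_{k-1}(m)\Lambda_{\ell-1}(m),
\]
and then I would remove the constraints $p\nmid m$ and $p<P^-(m)$ at the cost of acceptable errors. The terms $a\ge 2$ contribute $O(x\log^{k+\ell-2}x)$ by the trivial bound \eqref{Lambda_j bound} together with $\sum_{p^a\le x,\,a\ge2}a\log^2 p/p^a\ll 1$, exactly as in the estimate \eqref{usetrivialbound} in the proof of Lemma~\ref{lem:Akell}.

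The reindexing subtlety is the constant: summing over the \emph{smallest} prime factor gives $k\ell\sum_p\log^2 p\,\S^{*}_{k-1,\ell-1}(x/p)$ where $\S^{*}$ is a restricted sum (only $m$ with all prime factors $>p$), whereas the proposition has the factor $\ell$, not $k\ell$, and the unrestricted $\S_{k-1,\ell-1}$. The resolution is that one should \emph{not} pick the smallest prime factor but rather extract a prime from the $\Lambda_\ell$ structure alone: use the convolution identity $\Lambda_\ell = \Lambda*\Lambda_{\ell-1}$ to write $\Lambda_\ell(n) = \sum_{p^a\|n}(\log p^a)\,\text{(contribution)}$, and on squarefree-at-$p$ parts $\Lambda_\ell(pm)=\ell(\log p)\Lambda_{\ell-1}(m)$. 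Pairing this with $\Lambda_k(pm)$ and then invoking Lemma~\ref{Lambdaj pn} — which gives $\Lambda_k(pm)\le k(\log p)(\log pm)^{k-1}$, hence $\Lambda_k(pm)=(\log p)\cdot O((\log x)^{k-1})$ as a pointwise bound but $=k(\log p)\Lambda_{k-1}(m)$ when $p\nmid m$ — lets me write
\[
\S_{k,\ell}(x) = \ell\sum_{p\le x}\log^2 p \sum_{m\le x/p}\Lambda_{k-1}(m)\Lambda_{\ell-1}(m) + (\text{errors}),
\]
where the $\Lambda_k(pm)\to k(\log p)\Lambda_{k-1}(m)$ substitution is hidden inside: one keeps $\Lambda_{k-1}(m)$ weighted by $\log p$ from the $\Lambda_\ell$ side and by another $\log p$ from applying \eqref{eqn:Lambda pm} to $\Lambda_k(pm)$ — no wait, that reintroduces $k$. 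The actual mechanism must be: $\Lambda_\ell(n)=\sum_{de=n}\Lambda(d)\Lambda_{\ell-1}(e)$, so
\[
\S_{k,\ell}(x)=\sum_{de\le x}\Lambda(d)\Lambda_{\ell-1}(e)\Lambda_k(de);
\]
restrict to $d=p$ prime (prime powers $d=p^a$, $a\ge2$, contribute $O(x\log^{k+\ell-2}x)$), and among those, to $p\nmid e$ (the rest again being $O(x\log^{k+\ell-2}x)$ via Lemma~\ref{Lambdaj pn} applied to $\Lambda_k$). On $p\nmid e$, $\Lambda_k(pe)=k(\log p)\Lambda_{k-1}(e)$ — but this yields $k$, not $\ell$! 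So symmetry forces us instead to split $\Lambda_k$: write $\Lambda_k(n)=\sum_{de=n}\Lambda(d)\Lambda_{k-1}(e)$, restrict $d=p$, use $\Lambda_\ell(pe)=\ell(\log p)\Lambda_{\ell-1}(e)$ when $p\nmid e$, keep $\Lambda_{k-1}(e)$ as is, and the $\Lambda(p)=\log p$ from the decomposition of $\Lambda_k$ combines with the $\log p$ from $\Lambda_\ell(pe)$ to give $\ell\log^2 p$. That is the route, and it produces exactly \eqref{recursion}.

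The main obstacle, then, is the bookkeeping of error terms: (i) restricting the split-off divisor to primes rather than prime powers; (ii) restricting to $p\nmid e$ so that the clean identities $\Lambda_k(n)=\Lambda_{k-1}*\Lambda$ and \eqref{eqn:Lambda pm} apply and the cross-terms $p|e$ can be bounded; and (iii) checking that in each discarded case the trivial bound \eqref{Lambda_j bound}, or its refinement Lemma~\ref{Lambdaj pn}, together with $\sum_{p\le x}\log^2 p/p \ll \log^2 x$ and $\sum_{p^a\le x,\,a\ge2}(\log p)/p^a\ll1$, delivers an error of size $O(x\log^{k+\ell-2}x)$ — one power of $\log x$ below the main term, which is $\asymp x\log^{k+\ell-1}x$ once Lemma~\ref{lem:logplogxp} is fed in at the next (inductive) stage. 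None of these is hard individually; the care is in not losing a factor of $\log x$. I expect the write-up to mirror the structure of the computation \eqref{A_{k,ell}(x)}--\eqref{usetrivialbound} in the proof of Lemma~\ref{lem:Akell}, where precisely these manipulations were already carried out in a more complicated setting.
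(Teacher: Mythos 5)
Your final route --- expanding $\Lambda_k = \Lambda * \Lambda_{k-1}$ to pull out a prime $p$, applying the identity $\Lambda_\ell(pm)=\ell(\log p)\Lambda_{\ell-1}(m)$ for $p\nmid m$ to get the factor $\ell\log^2 p$, and disposing of the prime-power and $p\mid m$ contributions via the trivial bound and Lemma~\ref{Lambdaj pn} --- is exactly the paper's proof of Proposition~\ref{prop:Skl}. The argument is correct; the only cost of your exploratory detours is expository, not mathematical.
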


\begin{proof}

We  assume $k \geq \ell$ and begin by unfolding $\Lambda_k$ in the sum:
\begin{align}\label{Skl formula}
\S_{k, \ell} (x) & = \sum_{n\leq x} \left( \sum_{d|n} \Lambda(d)
\Lambda_{k-1} \left( \frac nd \right) \right) \Lambda_\ell (n) \\
& = \sum_{d\leq x} \Lambda (d) \left( \sum_{m \leq x/d} \Lambda
_{k-1} (m) \Lambda_\ell (md)\right)     \notag \\
& = \sum_{p^a \leq x} \log p \left( \sum_{m \leq
x/ p^a} \Lambda_{k-1} (m) \Lambda_\ell (mp^a) \right)  \notag     \\
& = \sum_{p \leq x} \log p \left( \sum_{m \leq x/p} \Lambda_{k-1}
(m) \Lambda_\ell (mp)\right)    \notag    \\
&  \qquad + \sum_{\substack{p^a \leq x \\ a \ge 2 }} \log p \left( \sum_{m \leq
x/ p^a} \Lambda_{k-1} (m) \Lambda_\ell (mp^a) \right)    \notag    \\
& = \Sigma_1 + \Sigma_2,   \notag
\end{align}
say. We split $\Sigma_1$ into two sums $\Sigma_{1, 1}$ and 
$\Sigma_{1, 2}$ according to whether $m$ in the inner sum is or is not
coprime to $p$. By \eqref{eqn:Lambda pm}, if $(m, p) =1$ then 
$\Lambda_\ell (mp) = \ell \Lambda_{\ell-1} (m) \log p$. Hence
\begin{align*}
\Sigma_{1, 1}    
 = \ell \sum_{p \leq x} \log^2 p \left( \sum_{\substack{ m \leq 
x/p\\ (m,p)=1 }} \Lambda_{k-1} (m) \Lambda_{\ell -1} (m) \right)\;.
\end{align*}
By Lemma~\ref{Lambdaj pn}, removing the coprimality condition here
introduces a change of 
$$
\ll  \ell \sum_{p \leq x} \log^2 p  \left(\frac{x}{p^2}  {k\ell \log^2 p} 
\log^{k+\ell-4}x \right)  \ll   x \log^{k + \ell -4} x \,.
$$
That is,
 \begin{align*}
\Sigma_{1, 1}    
 = \ell \sum_{p \leq x} \log^2 p \left( \sum_{\substack{ m \leq x/p  }} 
 \Lambda_{k-1} (m) \Lambda_{\ell -1} (m) \right)
 + O\left (x \log^{k + \ell -4} x \right)
 \;.
\end{align*}
 
For $\Sigma_{1, 2}$ we find that
$$ 
 \Sigma_{1, 2} =   \sum_{p \leq x} \log p 
 \left( \sum_{\substack{ m \leq x/p \\ p|m}} \Lambda_{k-1}
(m) \Lambda_l (mp) \right) \;.
 $$
By Lemma~\ref{Lambdaj pn} this is 
\begin{align*}
& \ll     \sum_{p \leq x} \log p \sum_{r \leq x/p^2} \Lambda_{k-
1} (rp) \Lambda_ \ell (r p^2)  
  \ll k \ell \sum_{p \leq x} \log^3 p \sum_{r \leq x/p^2 } (\log x)^{k +  \ell -3} \\
& \ll  x (\log x)^{k + \ell -3} \sum_{p \leq x} \frac{\log^3 p}{p^2}  
  \ll  x (\log x)^{k + \ell -3}\;.
\end{align*}
Hence, combining   $\Sigma_{1, 1}$ and $\Sigma_{1, 2}$, we obtain 
\begin{align}\label{Sigma 1}
\Sigma_{1}    
 = \ell \sum_{p \leq x} \log^2 p \left( \sum_{\substack{ m \leq x/p  }} 
 \Lambda_{k-1} (m) \Lambda_{\ell -1} (m) \right)
 + O\left (\ell x (\log x)^{k + \ell -2}\right)
 \;,
\end{align} 
which is the main term in the Proposition.

By 
the trivial bound \eqref{Lambda_j bound}  and  Lemma~\ref{Lambdaj pn} we have 
 \begin{align*}
\Sigma_{2} & \ll \ell \sum_{\substack{p^a \leq x\\ a \geq 2}} \log^2 p \left(
\sum_{m \leq x/p^a} (\log x)^{k + \ell -2} \right) \\
& \ll \ell x (\log x)^{k + \ell -2} \sum_{\substack{p^a \leq x\\ a \geq
2 }} \frac{\log^2 p}{p^a} \\
& \ll x (\log x)^{k + \ell -2} \;.
\end{align*}
Combining this with \eqref{Sigma 1} completes the proof of
Proposition~\ref{prop:Skl}.
\end{proof}
 
\begin{proof}[Proof of Theorem \ref{thm:Sksum}]
We first prove the bound~\eqref{Skl}.
If $k \ge 2$ then using the basic properties of
$\Lambda$ and $\Lambda_k$ we have
\begin{align*} \S_{k,1} (x) 
 =\mathstrut &
\sum_{n \leq x} \Lambda_k (n) \Lambda(n) \cr
=\mathstrut &
\sum_{p^a \leq x} \Lambda_k (p^a) \log p   \cr
 \le \mathstrut & \sum_{\substack{p^a \leq x \\ a\ge k } } (\log p^a)^{k+1}
\cr
 \ll \mathstrut &  x^{\frac1k} \log^{k+1} x \cr
\ll \mathstrut &  x^{\frac 12 + \varepsilon} ,
\end{align*}
which is much smaller than the claimed bound.

Now suppose \eqref{Skl} holds for some $\ell >1$ and all $k>\ell$.
By Proposition~\ref{prop:Skl}, the induction hypothesis,
and Lemma~\ref{lem:logplogxp}:
\begin{align*}
\S_{k , \ell+1}(x) & = (\ell+1) \sum_{p \leq x} \log^2 p
\;\S_{k-1, \ell} \left( x/p \right) 
+O(x (\log x)^{k + \ell -1} )\\
& \ll  \sum_{p \leq x} \log^2 p 
\ \frac{x}{p}\left( \log  \frac{x}{p} \right)^{k +  \ell -3}
  + O\left(x (\log x) ^{k + \ell -1}\right) \\
&\mathstrut \ll  x (\log x)^{k + \ell -1}.
\end{align*}
as required.
This proves \eqref{Skl}.

Now we prove~\eqref{Skk}.  When $k=1$ we have
\begin{equation}
\S_{1,1} (x) = \sum_{n\leq x} \Lambda^2 (n) = x\log x + O(x)\;,
\end{equation}  
so \eqref{Skk} holds in this case. Suppose \eqref{Skk} holds
for some $k>1$.  Then by Proposition~\ref{prop:Skl}, the
induction hypothesis, and Lemma~\ref{lem:logplogxp}:
\begin{align}
\S_{k+1,k+1 } (x) =\mathstrut&
(k+1) \sum_{p\le x} \log^2 p\ \S_{k,k}\left(\frac{x}{p} \right) 
    + O(x \log^{2k}x ) \cr
= \mathstrut &\frac{(k+1)!}{(2k-
1)!} x \sum_{p\leq x} \frac{\log^2p}{p} \left( \log \frac xp
\right)^{2k-1}   \\
& + O\left(x \sum_{p\leq x} \frac{\log^2 p}p \left( \log \frac
xp\right)^{2k-2}\right) + O(x \log^{2k}x )  \\
=\mathstrut & \frac{(k+1)!}{(2k+1)!} x \log^{2k+1} x  
 + O\left( x \log^{2k}x \right),
\end{align}
as required.
\end{proof}

\subsection{Completion of the proof of Proposition \ref{prop:Akell}}\label{sec:proof of Akell}
We are now ready to estimate $A_{k,\ell} (x)$. 

\begin{proof}
Suppose first that $k>\ell\ge 3 $.  
Using Theorem~\ref{thm:Sksum} and
Lemma~\ref{lem:Akell} we have
\begin{align}\label{FinForm Akl}
A_{k,\ell} (x) \ll\mathstrut & 
 x \sum_{p\leq x} \sum_{q \leq \frac xp}
    \frac{\log^3 p \, \log^3 q}{pq} \left( \log \frac{x}{pq}
\right)^{k + \ell -6} \cr
& \mathstrut +  x \sum_{p \leq x} \frac{\log^4 p}p \left( \log \frac
xp \right)^{k+ \ell -4} +  O\left( x (\log x)^{k+\ell} \right)\;.
\end{align}
By Lemma~\ref{lem:logplogxp} the first term is
\begin{equation}
  \ll x\sum_{p\le x} \frac{\log^3 p}{p} \left( \log \frac xp
\right)^{k+\ell -3}
\ll x\log^{k+\ell}x ,
\end{equation}
and the second term is also $\ll x \log^{k+\ell} x$.

When $k >\ell$ and $\ell =2$, we obtain the same bound.  The only
difference is that this time the first term in \eqref{FinForm Akl}
is omitted 
because $\sum_{n \leq x} \Lambda_{k-2} (n) \Lambda_0
(n) = \Lambda_{k-2} (1) =0$ when  $k >2$.
This proves \eqref{eqn:Aklbnd}.

Next suppose that $k\ge 3$.  By 
Lemma~\ref{lem:Akell} and then
Theorem~\ref{thm:Sksum} we have
\begin{align}
A_{k,k} (x)  =\mathstrut & (k-1)^2
\sum_{p \leq x} \sum_{q \leq  x/p}
\log^3 p \log^3 q
\sum_{m\le \frac{x}{pq}} \Lambda_{k-2}(m)^2 \cr
 &\mathstrut +\sum_{p \leq x} \log^4 p 
\sum_{m\le \frac{x}{p}} \Lambda_{k-1}(m)^2 \label{eqn:Akkmainterm}
+ O\left(x \log^{2k} x \right) \; \\
=\mathstrut & \frac{(k-1)(k-1)!}{(2k-5)!} 
\sum_{p \le x}
\frac{\log^3 p}{p}
\sum_{q \le  x/p}
\frac{\log^3 q}{q} 
\left(
\left(\log\frac{x}{pq}\right)^{2k-5} 
+ O\left(\frac{x}{pq} \log^{2k-6} x \right)
\right)\cr
 &\mathstrut +\frac{(k-1)!}{(2k-3)!} \sum_{p \leq x} \log^4 p 
\left(
\left(\log\frac{x}{pq}\right)^{2k-3} 
+ O\left(\frac{x}{p}\log^{2k-4} x\right)
\right)
+ O\left(x \log^{2k} x \right).
\end{align}
Applying Lemma \ref{lem:logplogxp} we obtain
\begin{align}
A_{k,k}(x)=\mathstrut &
\frac{2 (k-1)(k-1)!}{(2k-2)!} x \sum_{p\le x}
\frac{\log^3 p}{p}\left(\log \frac{x}{p}\right)^{2k-2} 
\cr
&\mathstrut +  \frac{6 (k-1)!}{(2k+1)!} x\log^{2k+1} x 
+ O\left(x \log^{2k} x \right) \cr
=\mathstrut & \frac{4 (k-1)(k-1)!}{(2k+1)!} x\log^{2k+1} x
 + \frac{6 (k-1)!}{(2k+1)!} x\log^{2k+1} x
+ O\left(x \log^{2k} x \right) \cr
=\mathstrut & 2\frac{(k-1)!}{(2k)!} x\log^{2k+1} x
+ O\left(x \log^{2k} x \right),
\end{align}
as claimed.

It remains to do the case $k =2$.  The
only change in the above analysis is in the first term of 
\eqref{eqn:Akkmainterm}.  Since $\Lambda_0(m)=1$ if $m=1$
and 0 otherwise, using Lemma~\ref{lem:Akell} and then
Theorem~\ref{thm:Sksum}  and Lemma~\ref{lem:logplogxp} we have
\begin{align*}
A_{2,2}(x)
=\mathstrut & \sum_{p \leq x} \log^3 p \sum_{q \leq \frac
xp} \log^3 q 
+\sum_{p\le x} \log^4 p \sum_{m\le x/p} \Lambda(m)^2 
+O(x \log^4 x) \\
& = x \sum_{p \leq x} \frac{\log^3 p}p \left( \log^2 \frac xp + O \left(
\log x \right) \right) 
+x\sum_{p\le x} \log^4 p \log\frac{x}{p}
+O(x \log^4 x) \\
& = \frac{2! \,2!}{5!} x  \log^5 x + \frac{3!1!}{5!} x \log^5 x+O (x \log^4 x) \\
&= \frac 1{12} x \log^5 x + O (x \log^4 x)\;.
\end{align*}
Note that this is the same as the general case with $k=2$.
This completes the proof.

Recalling equation \eqref{A(x) 3},
\begin{equation}
\A (x) = \sum^K _{k=0} A_{k,k} (x) \l^{-2k} + 2 \sum^{K-1}_{\ell =0}
\sum_{\ell < k \leq K} A_{k,\ell} (x) \l^{-(k+ \ell)}\;,
\end{equation}
and using the fact that $x/\l \ll 1$ if $x=T^\alpha$ with $0<\alpha<C_0$,
gives the final statement in Proposition~\ref{prop:Akell}.
\end{proof}


\section{Proof of the Explicit Formula}\label{sec:explicitformula}
We prove Lemma~\ref{lem:logderiv} and Proposition~\ref{prop:explicitformula}.

\subsection{Proof of Lemma~\ref{lem:logderiv}}
\begin{proof} 

Since $\Gamma(s)$ has simple poles at $s = 0, -2,-4,
\ldots,$  $L (s)$ has simple poles  with residue $-1$ at $s= -2, -4, \ldots$
and a  simple pole with residue 1 at $s =1$.  It is not difficult to show that 
$L(s)$ has only simple real zeros at  $m_1 \approx 7.6, m_2 \approx 2.8, 
m_3 \approx - 2.6, \dots$, with $m_j \to -\infty$ as $j \to\infty$.  
Multiplying both sides of \eqref{xi'/xi} by $\xi (s)$ and calculating the
logarithmic derivative, we obtain
\begin{equation}\label{xi''/xi'}%
\frac{\xi''}{\xi'} (s) = \frac{\xi'}\xi (s) + \frac{L' (s) + (\zeta'
/\zeta )' (s)}{L(s) + \zeta' /\zeta (s)} \;.
\end{equation}
Suppose now that $ \varepsilon > 0$. Then there exists an absolute constant
$C_1$ such that for $\sigma \geq 1 + \varepsilon$,
\begin{equation}\label{zeta ineq 1}
\left| \frac{\zeta'}\zeta (s) \right| \leq \frac{\zeta'}{\zeta} (\sigma)
\leq C_1 \varepsilon^{-1}
\end{equation}
and  
\begin{equation}\label{zeta ineq 2}
\left| \left( \frac{\zeta'}{\zeta} \right)' (s) \right| \leq
 C_1 \varepsilon^{-2}\;. 
\end{equation}
Hence, by \eqref{L defn}, there exists an absolute constant $C_2$ such that
\begin{equation}\label{zeta ineq 3} 
\left| \frac{\zeta'}\zeta (s) L(s)^{-1} \right| \leq \frac{4 C_1}{\varepsilon
\log (|s|+2)} < \frac 12 
\end{equation}
for $\sigma \geq 1 + \varepsilon$ and $|t| \geq T_\varepsilon = C_2
e^{8C_1/\varepsilon}$.  Now let $K$ be an arbitrary large integer.  Then
by \eqref{L ineq 2}, \eqref{xi''/xi'},\eqref{zeta ineq 1}, and \eqref{zeta ineq 3},
\begin{equation}\label{xi'' / xi' 2}
\frac{\xi''}{\xi'} (s) = L(s) + \frac{\zeta'}\zeta (s) + \left(
\frac{\zeta'}\zeta \right)' (s) L (s)^{-1} \sum^{K-1}_{j=0} \left( -
\frac{\zeta'}\zeta (s) L(s) ^{-1} \right)^j + 
O\left(\frac{1}{\varepsilon^2 2^K}\right)    
\end{equation}
for $\sigma \geq 1 + \varepsilon $ and 
$|t| \geq T_\varepsilon = C_2 e^{8C_1/\varepsilon}$.  
Using the definition of the $j$-fold von~Mangoldt function~\eqref{eqn:vonM},
this can be rewritten as
\begin{equation}
\frac{\xi''}{\xi'} (s) = L(s) + \sum^\infty_{n=1} n^{-s} \left( -
\Lambda (n) + \sum^{K-1}_{j=0} \frac{(\Lambda_j * \Lambda
\log)(n)}{L(s)^{j+1}} \right) + O\left(\frac{1}{\varepsilon^2 2^K}\right)\,.  
\end{equation}
This completes the proof of Lemma~\ref{lem:logderiv}.

\end{proof}

\subsection{Comparison to the Hardy $Z$-function}\label{sec:Z}

We now indicate why
the pair correlation functions
for the zeros of $\xi'$ and $Z'$ are equal to leading order.

Suppose $\mathcal X(s)=\Upsilon(s) \zeta(s)$ and let
$\mathcal L(s) = \displaystyle{\frac{\Upsilon'}{\Upsilon}(s)}$.
We have
\begin{equation}
\frac{\mathcal X''}{\mathcal X'} (s) = {\mathcal L}(s) + \frac{\zeta'}\zeta (s) +
\frac{{\mathcal L}' (s) + (\zeta'
/\zeta )' (s)}{{\mathcal L}(s) + \zeta' /\zeta (s)} \;.
\end{equation}
By choosing $\Upsilon$ appropriately, one can obtain either the
Riemann $\xi$-function or the Hardy $Z$-function.  In either case,
\begin{equation}
\mathcal L(s) \sim \frac12 \log s 
\end{equation}
and
\begin{equation}
\mathcal L'(s) \ll s^{-1},
\end{equation}
which is all that was used in the calculation of the form factor
$F_1(\alpha; T)$.  

The lower order terms in $\mathcal L(s)$ are different in those two cases,
and this should have an effect on the lower order terms of
$F_1(\alpha;T)$.  Presumably the lower order terms also have an arithmetic
component, so both should differ from the analogous expression from 
random matrix theory.

\subsection{Proof of Proposition~\ref{prop:explicitformula}}

\begin{proof} 

We will integrate both sides of \eqref{Dir series} against the
following kernel:
\begin{equation}\label{k defn} 
k(w,s) = \frac{2\sigma-1}{(w - (s-1/2)) (w - (1/2 - \bar{s}))}\;.
\end{equation}
It is easy to see that
\begin{equation}
k (w, 1-\bar{s} ) = - k (w,s)
\end{equation}
and
\begin{equation} k (-w,s) = k (w, \bar{s})\;.
\end{equation}
Moreover, as a function of $w$, $k (w,s)$ has simple poles at $w=s-1/2$
and $w=1/2 - \bar{s}$ with residues 1 and $-1$, respectively.  From the
partial fraction decomposition for $\zeta'/\zeta (w)$ and the fact that
$\zeta (w)$ has $O (\log T)$ zeros with ordinates in the interval $[T,
T+1]$, it follows that one can find an increasing, unbounded sequence
$\{ T_j \}^\infty_{j=1}$ such that
\begin{equation}\label{bd 1} %
 \frac{\zeta'}{\zeta} (u + \i T_j) \ll \log^2 T_j
\end{equation}
and 
\begin{equation}\label{bd 2}%
 \left( \frac{\zeta'}\zeta \right)' (u + \i T_j) \ll \log^3 T_j
\end{equation}
uniformly for $-1 \leq u \leq 2$.  Using these in 
\eqref{xi'/xi}  and \eqref{xi''/xi'}, we find
that
\begin{equation}\label{bd 3}%
\frac{\xi''}{\xi'} (u+ \i T_j) \ll \log^2 T_j 
\end{equation}
uniformly for $-1 \leq u \leq 2$.  We now write 
\begin{equation}\label{I_j 1}%
I_j = \frac 1{2\pi_\i} \int_{\R_j} \frac{\xi''}{\xi'} (w + 1/2) k (w,s)
x^w \d w \;,
\end{equation}
where $x \geq 1,  5/4 < \sigma <2$, and $\R_j$ is the positively oriented
rectangle with vertices at $c \pm \i T_j , -U \pm \i T_j$, where $c = 
1/2 + \varepsilon$, $\varepsilon < 1/8$, and $U$ is a large positive
number.  The integrand has simple poles at $w = \i \gamma$, $w = s -
1/2$, and $w = 1/2 - \bar{s}$.  Now, since $\varepsilon < 1/8$, $\Re\, (s -
1/2) = \sigma - 1/2 > 3/4 > c$. Therefore $s - 1/2$ lies outside $\R_j$.  Thus, by
the calculus of residues,
\begin{equation}\label{I_j 2}%
 I_j = - \frac{\xi''}{\xi'} (1-\bar{s} ) x^{1/2 - \bar{s}} +
\sum_{|\gamma | \leq T_j} k (\i \gamma, s) x^{\i \gamma}\;.
\end{equation}
 
We now estimate the contributions of the horizontal and left edges of
$\R_j$ to $I_j$.   Besides  \eqref{bd 3}  we need  the estimate
\begin{equation}\label{bd 4}%
  \frac{\xi''}{\xi' } (w) \ll \log 2 |w| 
\end{equation}
for $\Re \, w < - 1/2$.  By   \eqref{xi'/xi},  \eqref{L defn}, \eqref{L ineq 2}, 
  and \eqref{xi''/xi'} this holds in $\Re \, w > 3/2$. Hence it holds in $Re\,w < - 1/2$ by 
the functional equation \eqref{fnc equ}. 
By \eqref{bd 3} and \eqref{bd 4}, the top and bottom edges
of $\R_j$ contribute
\begin{align*}
\ll & \int^{-1}_{-U} \frac{\log 2 |u+i T_j|}{(T_j -t)^2} x^u \d u +
\int^c_{-1} \frac{\log^2 T_j}{(T_j - t)^2} x^u \d u \\
\ll & x^{1/2 + \varepsilon} \; \frac{\log^2 T_j}{(T_j - t)^2} 
\end{align*}
to $I_j$.  The left edge contributes
\begin{align*}
\ll & \int^{T_j}_0 \; \frac{\log \sqrt{U^2 + v^2}}{U^2 + v^2} x ^{-U}\d v \\
\ll & \frac{x^{-U}}U \int^\infty_0 \frac{\log U + \log (1 + x^2)}{1 + x^2} \; \d x\\
\ll & x^{-U} \frac{\log U}U \;.
\end{align*}
Letting $U$ and $T_j$ both tend to infinity, we obtain
\begin{equation}\label{sum over zeros formula 1}
\sum_{\gamma} k (\i \gamma, s) x^{\i \gamma} = \frac{\xi''}{\xi'}
(1- \bar{s}) x^{1/2 - \bar{s}} + \frac 1{2\pi \i} \int^{c + \i \infty}_{c-
\i \infty} \frac{\xi''}{\xi'} (w + \frac 12 ) k (w,s) x^w \d w \;.
\end{equation}
 
We evaluate the integral here by replacing $\xi''/\xi' (w +
1/2)$  by \eqref{Dir series}.  Since this representation 
holds only when $|\Im\,w | \geq T_\varepsilon$, we have  
\begin{equation}
\begin{aligned}
 \frac 1{2\pi \i} \int^{c+\i \infty}_{c-\i \infty}  \frac{\xi''}{\xi'} &(w +
1/2) k (w,s) x^w \d w \\
=& \frac 1{2\pi \i} \int^{c + \i \infty}_{c-\i \infty} \left( L (w + 1/2) +
\sum^\infty_{n=1} \frac{a_K (n, w + 1/2)}{n^{w + 1/2}} \right) k (w,s)
x^w \d w\\
&+ \E_1 + \E_2 + \E_3 \,,
\end{aligned}
\end{equation}
where
\begin{align*}
\E_1 &=  \frac 1{2\pi \i} \int^{c+\i T_\varepsilon}_{c-\i T_\varepsilon}  \frac{\xi''}{\xi'} (w +
1/2) k (w,s) x^w \d w \,, \\
\E_2 &= \frac 1{2\pi \i} \int^{c + \i T_\varepsilon}_{c-\i T_\varepsilon} \left( L (w + 1/2) +
\sum^\infty_{n=1} \frac{a_K (n, w + 1/2)}{n^{w + 1/2}} \right) k (w,s)
x^w \d w \,,   \\
\E_3 &=  O \left( \int^{\infty}_{T_\varepsilon} \frac{1}{\varepsilon 2^K} \frac{x^{1/2
+ \varepsilon}}{ 1 + (v-t)^2} \d v \right)  \;.
\end{align*}
Now
\begin{align*}
\E_1 & \ll  \log T_\varepsilon \left( \int_{0}^{T_\varepsilon}  
\frac{x^{1/2 + \varepsilon}}{ 1 + (v-t)^2} \d v \right) \\
&\ll_\varepsilon  x^{1/2 + \varepsilon} \left( \frac 1{|t| +2} 
+ \frac 1{|t - T_\varepsilon| + 2} \right) \\
& \ll_\varepsilon \frac{x^{1/2+ \varepsilon}}{|t| + 2}      \;.
 \end{align*}
Furthermore, by \eqref{a_K (n,s) bd}
\begin{align*}  
\E_2 \ll &  x^{1/2 + \varepsilon} \int^{T_\varepsilon}_0 \left( \log (v+2) +
C_3 ^{K+2} \sum^\infty_{n=1} \frac{(\log n)^{K+2}}{n^{1 + \varepsilon}}
\right) \, \frac{dv}{(1 + |v-t|)^2} \\
& \ll x^{1/2 + \varepsilon} \left( \log T_\varepsilon + \left(
\frac{C_3}\varepsilon \right)^{K+3} \right) \left( \frac 1{|t|+2} +
\frac 1{|t - T_\varepsilon | +2} \right) \\
& \ll_{\varepsilon, K} \; x^{1/2 + \varepsilon} \left( \frac 1{|t| +2} +
\frac 1{|t-T_\varepsilon |+2} \right) \\
& \ll_{ \varepsilon, K} \; \frac{x^{1/2 + \varepsilon}}{ (|t| +2)} \; .
\end{align*} 
Clearly we also have 
$$
\E_3 \ll_{ \varepsilon, K} \; \frac{x^{1/2 + \varepsilon}}{ (|t| +2)} \; .
$$
Therefore,
\begin{equation}\label{Int form 1}
\begin{aligned}
 \frac 1{2\pi \i} \int^{c+\i \infty}_{c-\i \infty}  \frac{\xi''}{\xi'} &(w +
1/2) k (w,s) x^w \d w \\
=& \frac 1{2\pi \i} \int^{c + \i \infty}_{c-\i \infty} \left( L (w + 1/2) +
\sum^\infty_{n=1} \frac{a_K (n, w + 1/2)}{n^{w + 1/2}} \right) k (w,s)
x^w \d w\\
&+ O_{\varepsilon, K }\left( \frac{x^{1/2 + \varepsilon}}{|t| +2} \right) \;. 
\end{aligned}
\end{equation}
We split the integral on the right-hand side into two
parts, namely,
\begin{align}\label{I 1 + I 2}
\I_1  + \I_2   = & \frac{1}{2\pi \i} \int^{c + \i \infty}_{c-\i\infty} \left( L (w + 1/2) +
\sum_{n \leq x} \frac{a_K (n,w + 1/2)}{n^{w + 1/2}} \right) k(w,s) x^w \d w\\
+ & \; \frac 1{2\pi \i} \int^{c + \i\infty}_{c- \i \infty} \left( \sum_{n>x}
\frac{a_K (n, w + 1/2)}{n^{w + 1/2}} \right) k (w,s) x^w \d w \;.\notag
\end{align}
In $\I_1$ we pull the contour left to $-\infty$
and in doing so we pass a pole of $k (w,s)$ at $w = 1/2 - \bar{s}$,
the poles of $L (w + 1/2)$ at $w = 1/2, - 5/2, -9/2, - 13/2, \dotsc$, and
the poles of $1/L (w + 1/2)$ at the points $w = m_3 - 1/2, m_4 - 1/2, \dots$,
where the $m_j$'s are the zeros of $L(s)$.
We find that
\begin{equation}
\begin{aligned}
\I_1 & = - \left( L ( - \bar{s} ) + \sum_{n\leq x} \frac{a_K
(n, 1 - \bar s)}{n^{1-\bar s}} \right) x^{1/2 - \bar s}
- \sum^\infty_{m=1} k (-2m + 1/2, s) x^{-2m +1/2}\\
& \quad + k (1/2, s) x^{1/2}  
  + \sum_{n\leq x} \sum^K_{k=0} \frac{(\Lambda_k * \Lambda \log)
(n)}{\sqrt{n}} \sum^\infty_{j=3} \left( \frac xn \right)^{m_j - 1/2}
P_{k, j}  \left( \log \frac xn, s \right)\;, 
\end{aligned}
\end{equation}
where
\begin{equation}
\begin{aligned}
x^{m_j - 1/2} P_{k, j} (\log x,s) & ={\rm{Res}}_{w=m_j - 1/2} \, \frac{k
(w,s) x^w}{L (w + 1/2)^{k+1}} \\
& \ll \frac{(\log x)^k}{k!} x^{m_j - 1/2} | k (m_j - 1/2, s)| \\
& \ll \frac{(\log x)^k}{k!} x^{m_j - 1/2} |m_j -s|^{-2}\;.
\end{aligned}
\end{equation}
Using this and  \eqref{Alpha_k bound}, we find that the sum over
$m_j$ is
\begin{align*}
\ll & \sum_{n\leq x} \frac{(\log n)^{K+2}}{\sqrt{n}} \sum^K_{k=0}
\frac{(\log x/n)^k}{k!} \sum^\infty_{j=3} \left( \frac xn \right)^{m_j -
1/2} \frac 1{|m_j - s|^2 } \\
\ll & \sum_{n \leq x} \frac{(\log n)^{K+2}}{\sqrt{n}} \sum^K_{k=0}
\frac{ (\log x/n)^k}{k!} \left( \frac xn \right)^{m_3 - 1/2} \frac1{|s|}\\
\ll & (\log x)^{2K+2} |s|^{-1} x^{m_3 - 1/2} \sum_{n\leq x} \frac
1{n^{m_3}}\\
\ll & x^{1/2} (\log x)^{2K +2} |s|^{-1} \;.
\end{align*}
The sum over $m$  is
$$ 
\ll \sum^\infty_{m=1} \, \frac{x^{-2m +1/2}}{|2m-s|^2} \ll x^{-3/2}
 |s|^{-1} \;.
$$
Hence
\begin{equation}\label{I 1}
\begin{aligned}
\I_1  &= -  \left( L (1-\bar s) + \sum_{n\leq x} \frac{a_K (n, 1 - \bar
s)}{n^{1-\bar s}} \right) x^{1/2 - \bar s} + k ( 1/2, s) x^{1/2}  \\
 & \;+  O \left(x^{1/2} (\log x)^{2K +2}|s|^{-1} \right) \,.
\end{aligned}
\end{equation}
 
We treat $\I_2$ by moving the contour away to $+\infty$.
This time we pass the pole from $k (w, s)$ at $w = s - 1/2$ and the poles of
$ 1/L(w + 1/2)$ at $w = m_1 - 1/2$ and $m_2 - 1/2$. Thus we find that 
$$
\I_2 = - \left( \sum_{n>x} \frac{a_K (n,s)}{n^s}\right) x^{s - 1/2} -
\sum_{n>x} \sum^K_{k=0} \frac{(\Lambda_k * \Lambda \log) (n)}{\sqrt{n}}
\sum^2_{j=1} \left( \frac xn \right)^{m_j - 1/2} P_{k, j} \left( \log
\frac xn, s \right) \,.
$$
We treat the second of these two terms  as we did the corresponding term in $\I_1$ 
and find that it is
\begin{align*}
\ll & \frac 1{|s|^2} \sum_{n>x} \frac{(\log n)^{K+2} }{\sqrt{n} } \sum^K_{k=0}
\frac{(\log n/x)^k}{k!} \left( \left( \frac xn \right)^{m_1 -1/2} +
\left( \frac xn \right)^{m_2 - 1/2} \right)  \\
\ll & \frac 1{|s|^2} \sum_{n>x} \frac{ (\log n)^{2K+2} }{ \sqrt{n} } \left(
\left( \frac xn \right)^{m_1 - 1/2} + \left( \frac xn \right) ^{m_2 -
1/2} \right) \\
\ll & x^{1/2} (\log x)^{2K +2} |s|^{-2} \;.
\end{align*}
Hence,
\begin{equation}\label{I 2}
\I_2 = -x^{ s- \frac12}  \left( \sum_{n>x} \frac{a_K (n,s)}{n^s} \right) 
+ O \left( x^{1/2} (\log x)^{2K+2}|s|^{-2} \right)\;.
\end{equation}

Combining   \eqref{sum over zeros formula 1},  \eqref{Int form 1}, 
\eqref{I 1 + I 2}, \eqref{I 1}, and \eqref{I 2}, we see that 
\begin{align*}
\sum_{\gamma} k (\i \gamma, s) x^{\i \gamma} & = \frac{\xi''}{\xi'}
(1- \bar s) x^{1/2 - \bar s} - L (1-\bar s) x^{1/2 - \bar s}\\
& - x^{-1/2} \left( \sum_{n\leq x} a_K (n, 1-\bar s) \left( \frac xn
\right)^{1-\bar s} + \sum_{n>x} a_K (n,s) \left( \frac xn \right)^s
\right) \\
& + k (1/2, s) x^{1/2} + O_{\varepsilon, K} \left( x^{1/2 +
\varepsilon} |s|^{-1} \right) + O \left( x^{1/2} (\log x)^{2K+2}|s|^{-1} \right) \;. 
\end{align*}
By the functional equation  and \eqref{xi''/xi'},
$\xi''/\xi'(1-\bar s) = -\xi''/\xi'(\bar s) = -\xi'/\xi(\bar s) + O (1)$
for $5/4 < \sigma <2$. By \eqref{L defn} and \eqref{xi'/xi}
this equals $-1/2 \log \bar s +O(1)$. 
Hence, writing $\tau= |t|+2$, and using the definition \eqref{k defn}, 
we finally  obtain the explicit formula in Proposition~\ref{prop:explicitformula}.
\end{proof}


\begin{thebibliography}{}

\bibitem{B} {\sl M. Berry},
Universal oscillations of high derivatives.
Proc. R. Soc. Lond. Ser. A Math. Phys. Eng. Sci. 461 (2005),
no. 2058, 1735--1751.

\bibitem{C} {\sl B. Conrey},
More that two-fifths of the zeros of the Riemann zeta-function
are on the critical line.
J. Reine Angew. Math.  399  (1989), 1--26

\bibitem{CFKRS} {\sl B. Conrey}, {\sl D. Farmer}, {\sl J. Keating},
{\sl M. Rubinstein}, and {\sl N. Snaith},
Integral moments of $L$-functions. 
 Proc. London Math. Soc. (3)  91  (2005),  no. 1, 33--104.

\bibitem{CI} {\sl B. Conrey} and {\sl H. Iwaniec},
Spacing of zeros of Hecke $L$-functions and the class number problem.
Acta Arith. 103 (2002), no. 3, 259--312. 

\bibitem{CS} {\sl B. Conrey} and {\sl Soundararajan}, personal communication.

\bibitem{CCS} {\sl T. Craven}, {\sl G. Csordas}, and
{\sl W. Smith}.
The zeros of derivatives of entire functions and the
P\'olya-Wiman conjecture.
Ann. of Math. (2) 125 (1987), no. 2, 405–431.

\bibitem{F} {\sl D. Farmer},
Counting distinct zeros of the Riemann zeta-function.
 Electron. J. Combin.  2  (1995), Research Paper 1, approx. 5 pp.

\bibitem{FR} {\sl D. Farmer} and {\sl R. Rhoades},
Differentiation evens out zero spacings. 
Trans. Amer. Math. Soc. 357 (2005), no. 9, 3789--3811

\bibitem{FG} {\sl D. Farmer} and {\sl S. Gonek},
Mean values of $\zeta'/\zeta$, the distribution of
almost primes,
 and the distribution of zeros. Preprint.

\bibitem{FY} {\sl D. Farmer} and {\sl M. Yerrington},
Crystallization of random trigonometric polynomials. 
J. Stat. Phys. 123 (2006), no. 6, 1219--1230

\bibitem{GM} {\sl D. A. Goldston} and {\sl H. L. Montgomery},
Pair correlation of zeros and primes in short intervals,
Analytic Number Theory and Diophantine Problems, Birkhauser,
Boston, Mass. (1987), 183--203.

\bibitem{H} {\sl D. Hejhal},
On the triple correlation of zeros of the zeta function.
Internat. Math. Res. Notices 1994, no. 7, 293ff.,

\bibitem{Ki} {\sl H. Ki}, 
The Riemann $\Xi$-function under repeated differentiation.  
J. Number Theory 120 (2006), no. 1, 120--131. 

\bibitem{KKi} {\sl H. Ki} and {\sl Y.-O. Kim},
On the number of nonreal zeros of real entire functions and the
Fourier-P\'olya conjecture.
Duke Math. J. 104 (2000), no. 1, 45–73.

\bibitem{Kim} {\sl Y.-O. Kim}, 
A proof of the P\'olya-Wiman conjecture.
Proc. Amer. Math. Soc. 109 (1990), no. 4, 1045–1052.

\bibitem{M} {\sl H.L. Montgomery}, The pair correlation of zeros of the zeta 
function, {\textit Analytic Number Theory}, 
Proceedings of Symposia in Pure Mathematics {\textbf 24} (1973) 181--193            

\bibitem{MS} {\sl H.L. Montgomery} and {\sl K. Soundararajan},
Beyond pair correlation.
Paul Erdös and his mathematics, I (Budapest, 1999), 507--514,
Bolyai Soc. Math. Stud., 11, János Bolyai Math. Soc., Budapest, 2002.


\bibitem{RS} {\sl Z. Rudnick} and {\sl P. Sarnak},
Zeros of principal $L$-functions and random matrix theory.
Duke Math. J. 81 (1996), no. 2, 269--322. 

\bibitem{T} E.C. Titchmarsh, \textit{The Theory of the Riemann Zeta-Function} 
(2nd edition, revised by D.R. Heath-Brown), Oxford Science Publications, 1986

\end{thebibliography}
\end{document}